\theoremstyle{plain}
\newtheorem{theorem*}{Theorem}
\newtheorem*{lemma*}{Lemma}
\newtheorem{corollary*}{Corollary}
\newtheorem*{proposition*}{Proposition}
\newtheorem{conjecture*}{Conjecture}
\newtheorem{theorem}{Theorem}[section]
\newtheorem{lemma}[theorem]{Lemma}
\newtheorem{corollary}[theorem]{Corollary}
\theoremstyle{remark}
\newtheorem*{remark}{Remark}
\newtheorem*{example}{Example}
\newtheorem*{claim}{Claim}
\theoremstyle{definition}
\def\gl{\mbox{GL}} \def\Q{\Bbb{Q}} \def\F{\Bbb{F}} \def\Z{\Bbb{Z}}  \def\C{\Bbb{C}}
 \def\a{\alpha} \def\g{\gamma}  \def\bp{\begin{pmatrix}}
\def\sm{\setminus} \def\ep{\end{pmatrix}} \def\bn{\begin{enumerate}} 
   \def\en{\end{enumerate}}
\def\ba{\begin{array}} \def\ea{\end{array}}  
 \def\S{\Sigma}  \def\a{\alpha}  \def\ti{\tilde}
  \def\im{\mbox{Im}} 
\def\ker{\mbox{Ker}}\def\be{\begin{equation}} \def\ee{\end{equation}} 
 \def\hom{\mbox{Hom}}  
 \def\aut{\mbox{Aut}}  
 \def\dim{\operatorname{dim}}  
 \def\ct{\C[t^{\pm 1}]}    \def\ck{\C^k} \def\ckt{\C^k[t^{\pm 1}]} \def\ft{\F[t^{\pm 1}]}\def\fkt{\F^k[t^{\pm 1}]}
   \def\fr12{\frac{1}{2}} \def\z12{\Z[\fr12]}
\def\ol{\overline}
\def\max{\mbox{max}}
\def\min{\mbox{min}}
\def\v{\varphi}
\def\S{\Sigma}
\begin{document}

\title{Taut sutured manifolds and twisted homology}

\author{Stefan Friedl}
\address{Mathematisches Institut\\ Universit\"at zu K\"oln\\   Germany}
\email{sfriedl@gmail.com}

\author{Taehee Kim}

\address{Department of Mathematics\\
  Konkuk University\\
  Seoul 143--701\\
  Republic of Korea}
  \email{tkim@konkuk.ac.kr}

\date{\today}

\subjclass[2010]{57M27}

\keywords{sutured manifold, twisted homology}

\begin{abstract}
We give a necessary and sufficient criterion for a sutured manifold $(M,\gamma)$ to be taut in terms of the twisted homology of the pair $(M,R_-)$.
\end{abstract}

\maketitle

\section{Introduction}

A sutured manifold  $(M,\gamma)$
is a compact, connected, oriented $3$-manifold $M$ together with a set of disjoint annuli  $\g$ on $\partial M$
which  turn $M$ naturally into a cobordism between  oriented surfaces $R_-=R_-(\gamma)$ and $R_+=R_+(\gamma)$ with boundary.
We refer to Section \ref{section:bsm} for the precise definition.

We say that a sutured manifold $(M,\gamma)$ is  \emph{balanced}
if  $\chi(R_+) = \chi(R_-)$.
Balanced sutured manifolds arise in many different contexts. For example  $3$-manifolds cut along non-separating surfaces naturally give rise to balanced sutured manifolds.
For the remainder of the introduction we will only be concerned with balanced sutured manifolds. Later on we will also consider the case of
general sutured manifolds.

Given a surface $S$ with connected components $S_1\cup\dots \cup S_k$ we define its \emph{complexity} to be
$\chi_-(S)=\sum_{i=1}^k \max\{-\chi(S_i),0\}$. Following Gabai \cite[Definition~2.10]{Ga83}
we say that  a balanced sutured manifold $(M,\gamma)$ is \emph{taut} if $M$ is irreducible and if $R_-$ and $R_+$ have minimal complexity among all surfaces representing the homology class $[R_-]=[R_+]\in H_2(M,\gamma;\Z)$.

Given a representation $\a\colon \pi_1(M)\to \gl(k,\F)$ over a field $\F$ we can consider the twisted homology
groups $H_*^\a(M,R_-;\F^k)$. In this paper we give a
  necessary and sufficient
 criterion for a balanced sutured manifold $(M,\gamma)$ to be taut in terms of the twisted homology of the pair $(M,R_-)$.
More precisely, we will prove the  following  theorem:

\begin{theorem}\label{mainthmintro}\label{mainthm}
Let $(M,\gamma)$ be an irreducible  balanced sutured manifold with $M\ne S^1\times D^2$ and $M\ne D^3$. Then $(M,\gamma)$ is taut if and only if
$H_1^\a(M,R_-;\C^k)=0$ for some unitary representation $\a\colon \pi_1(M)\to U(k)$.
\end{theorem}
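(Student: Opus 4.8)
The plan is to split the biconditional into its two directions, treating the ``only if'' direction (tautness $\Rightarrow$ vanishing twisted homology) as the substantive one and the ``if'' direction (existence of a vanishing class $\Rightarrow$ tautness) as a more formal consequence of Gabai's sutured manifold machinery together with standard properties of twisted homology under products.

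\emph{The ``if'' direction.} Suppose $H_1^\a(M,R_-;\C^k)=0$ for some unitary $\a\colon\pi_1(M)\to U(k)$. I would argue the contrapositive: if $(M,\gamma)$ is not taut (but irreducible, and not $S^1\times D^2$ or $D^3$), then $R_-$ is not complexity-minimizing in $[R_-]\in H_2(M,\gamma;\Z)$, and there is a decomposing surface producing a ``norm drop.'' The key input here is a classical half-lives-half-dies / Euler characteristic computation: for \emph{any} representation $\a$, the alternating sum $\sum (-1)^i \dim H_i^\a(M,R_-;\C^k) = k\cdot\chi(M,R_-) = -k\cdot\chi(R_-)$ (using $\chi(M)=\chi(R_-)$ for a balanced sutured manifold, up to the usual identifications). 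Since $H_0^\a(M,R_-;\C^k)=0$ whenever $R_-$ meets every component and $\a$ has no relevant invariants — more carefully, one controls $H_0$ and $H_2$ — the vanishing of $H_1^\a$ forces $\chi_-(R_-)$ to equal the a priori lower bound given by the representation, which by a twisted-homology version of the Thurston/McMullen norm lower bound (or directly from $\det$ of the twisted Alexander-type invariant) certifies that $R_-$ is norm-minimizing. Irreducibility is assumed, so $(M,\gamma)$ is taut. The mild hypotheses $M\ne S^1\times D^2, D^3$ are there precisely to exclude the degenerate cases where the boundary pattern or $H_2$ is too small for the Euler-characteristic bookkeeping to bite.

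\emph{The ``only if'' direction.} This is where Gabai's theory enters in force. Assuming $(M,\gamma)$ is taut, I would invoke the existence of a \emph{sutured manifold hierarchy} (Gabai \cite{Ga83}): a finite sequence of sutured manifold decompositions $(M,\gamma)=(M_0,\gamma_0)\rightsquigarrow(M_1,\gamma_1)\rightsquigarrow\cdots\rightsquigarrow(M_n,\gamma_n)$ along well-groomed decomposing surfaces, ending in a disjoint union of products $(R\times I, \partial R\times I)$ (equivalently $3$-balls with a single suture). The strategy is then to build the unitary representation $\a$ by ``pulling back'' along this hierarchy: on the terminal product piece the relevant twisted homology vanishes for essentially any representation (a product $R\times I$ deformation retracts to $R$, and $H_1^\a(R\times I, R\times 0;\C^k)=0$ always since the pair is homotopy equivalent to $(R,R)$), and then one shows inductively that if a representation on $\pi_1(M_{i+1})$ yields vanishing $H_1^\a$ for $(M_{i+1},R_-^{i+1})$, then some representation on $\pi_1(M_i)$ — obtained by extending over the decomposing surface, possibly after passing to a finite-index subgroup / larger $k$ via an induced representation — yields vanishing $H_1^\a$ for $(M_i, R_-^i)$. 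Unitarity is maintained because inducing a unitary representation gives a unitary representation, and because the relevant group-theoretic extension problem (separating $\pi_1$ of the two sides of the decomposing surface) can be solved in a finite quotient, hence by a finite-dimensional unitary (indeed permutation-type) representation; this is where residual finiteness of $3$-manifold groups, or Agol's theorem, can be brought in.

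\emph{Main obstacle.} The crux — and the step I expect to be hardest — is the inductive gluing in the ``only if'' direction: controlling how twisted homology of the pair $(M,R_-)$ behaves under a single sutured manifold decomposition along a surface $S$. One wants a Mayer--Vietoris argument relating $H_*^\a(M,R_-)$ to $H_*^\a(M',R'_-)$ and $H_*^\a(S)$, but the decomposing surface $S$ gets distributed between $R_-$ and $R_+$ in a way that makes the bookkeeping of the relative cycles delicate, and a naive representation on $\pi_1(M')$ need not extend to $\pi_1(M)$ at all. Overcoming this requires (i) choosing the hierarchy to be well-groomed so that $S$ interacts with the sutures in a controlled way, and (ii) replacing ``extend the representation'' with ``find a finite cover of $M$ in which $S$ lifts to a separating surface, then use the regular representation of the deck group'' — i.e. trading a genuine extension for an induced representation on a finite quotient. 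Verifying that this induced unitary representation actually kills $H_1$ of the pair, and not merely controls its Euler characteristic, is the technical heart of the argument; it should follow from the product structure at the bottom of the hierarchy propagating upward, but making the induction hypothesis strong enough to survive each gluing is the real work.
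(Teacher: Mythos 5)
Your ``if'' direction is in the same spirit as the paper's: control $H_0$ and $H_3$ (via Lemma \ref{lem:h0surjects} and duality), use $\chi^\a(M,R_-;\C^k)=k\chi(M,R_-)=0$ for a balanced sutured manifold, and use unitarity to transfer the vanishing of $H_1(M,R_-;\C^k)$ to $H_2(M,R_+;\C^k)$ and hence to $H_1(M,R_+;\C^k)$. But the step you call ``a twisted-homology version of the Thurston/McMullen norm lower bound'' is not an off-the-shelf result in the sutured setting; it is precisely the main technical theorem of that half of the paper (Theorem \ref{mainthmonlyif}), proved by cutting $M$ along a surface $S$ realizing $x(M,\gamma)$ into pieces $M_\pm$ with $R_\pm\subset\partial M_\pm$ and comparing the twisted Betti numbers of $(M_\pm,R_\pm)$, $(M_\pm,S)$ and $(M,R_\pm)$ through long exact sequences and Euler characteristics. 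Your sketch asserts the conclusion of that theorem rather than proving it, so this half is incomplete but repairable along the lines you indicate.

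The genuine gap is in the ``only if'' direction. Your plan --- run Gabai's hierarchy and propagate a representation with vanishing $H_1^\a(M_{i+1},R_-^{i+1};\C^k)$ back up to $(M_i,R_-^i)$ --- is exactly the step you yourself flag as unresolved, and the paper does not proceed this way; it is far from clear that it can be made to work. A representation of $\pi_1(M_{i+1})$ need not extend to $\pi_1(M_i)$, and replacing ``extend'' by ``induce from a finite quotient'' changes the twisted homology in ways that a Mayer--Vietoris bookkeeping does not control: vanishing of $H_1$ for the induced representation is far stronger than any Euler-characteristic identity, and nothing in your sketch produces it. The paper sidesteps the hierarchy entirely: it forms the double $W=D(M,\gamma)$ along $R_-\cup R_+$, notes that $R_-$ is Thurston-norm minimizing in $W$ and dual to a class $\phi$, invokes Liu, Przytycki--Wise and Wise to make $\pi_1(M)$ virtually RFRS, applies Agol's theorem to obtain a finite cover of $W$ in which $\phi$ pulls back to a quasi-fibered class, and then applies the Friedl--Vidussi theorem producing a unitary representation of $\pi_1(W)$ whose twisted Alexander polynomials $\Delta_{W,\phi,i}^\a$ are non-zero and satisfy $k\|\phi\|_T=\sum_i(-1)^{i+1}\deg\Delta_{W,\phi,i}^\a$. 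From this one deduces that both inclusions $H_i(R_\pm;\C^k)\to H_i(W\sm \nu R_\pm;\C^k)$ are isomorphisms, and a Mayer--Vietoris argument then shows $H_i(R_-;\C^k)\to H_i(M;\C^k)$ is an isomorphism, i.e. $H_*(M,R_-;\C^k)=0$. (The hierarchy does appear, but only hidden inside the proof of Agol's theorem.) Without this route, or some other resolution of your inductive gluing step, your argument for the harder direction does not close.
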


To the best of our knowledge both directions of the theorem are new.
Whereas the `if' direction can be proved using classical methods
the `only if' direction uses the recent revolutionary work by Agol \cite{Ag08}, Liu \cite{Liu11}, Przytycki-Wise \cite{PW12}
and Wise \cite{Wi12}.

The paper is organized as follows:
In Section \ref{section:tn} we recall the definition of the Thurston norm, which will play an important role in the
proof of Theorem \ref{mainthm}. In Section \ref{section:bsm} we introduce balanced sutured manifolds
and in Section \ref{section:twihom} we define twisted homology groups.
The `if' direction of Theorem \ref{mainthm} is proved in Section \ref{section:mainthmonlyif}
and the `only if' direction of Theorem \ref{mainthm} is proved in Section \ref{section:mainthmif}.

\subsection*{Conventions and notations.}
All $3$-manifolds are assumed to be oriented,  compact and connected, unless it says explicitly otherwise. By $\F$ we will always mean a field.

\section{Definitions}

\subsection{The Thurston norm}\label{section:tn}

Let $N$ be a 3-manifold.
It is well--known that  any class in $H^1(N;\Z)$ is dual to a properly embedded oriented surface.
The   Thurston norm of $\phi\in H^1(N;\Z)$ is defined as
\[ \|\phi\|_T := \min\{\chi_-(\S)\, |\, \S \subset N\mbox{ properly embedded and dual to }\phi\}.\]
 Thurston
\cite{Th86} showed that  $\|-\|_T$ is a seminorm on $H^1(N;\Z)$ which thus
can be extended to a seminorm on $H^1(N;\Q)$ which we also denote by $\|-\|_T$.

\subsection{Sutured manifolds} \label{section:bsm}
We now recall the notion of a sutured manifold.
Loosely speaking a sutured manifold is a cobordism between oriented surfaces with boundary.
More precisely, a sutured manifold is a $3$-manifold $M$ with non-trivial boundary
and together with a decomposition of its boundary
\[ \partial M=-R_-\cup s \times [-1,1] \cup R_+\]
into oriented submanifolds where the following conditions hold:
\bn
\item $s$ consists of oriented simple closed curves,
\item $\partial R_-=R_-\cap (s\times [-1,1])=s\times \{-1\}$ as oriented curves,
\item $\partial R_+=R_+\cap (s\times [-1,1])=s\times \{+1\}$ as oriented curves,
\item $R_-$ and $R_+$ are disjoint.
\en
We denote by $\gamma$  the union of the annuli $s\times [-1,1]$ together with an orientation of the `sutures' $s=s\times 0$.
Note that $R_+$ and $R_-$ are determined by $\g$,
following Gabai \cite{Ga83} we therefore usually denote a sutured manifold by $(M,\gamma)$
and we write $R_\pm(\gamma)=R_\pm$.

Note that the notion of sutured manifolds is due to Gabai  \cite{Ga83}, but our definition differs slightly from Gabai's definition
in so far as we do not allow `toroidal sutures' and as we restrict ourselves to connected $3$-manifolds.

We now recall the following definitions from the introduction:
Let $(M,\gamma)$ be a sutured manifold.
\bn
\item   $(M,\gamma)$ is  called \emph{balanced}
if   $\chi(R_+) = \chi(R_-)$.
(This definition agrees with the definition given by Juh\'asz \cite{Ju06} with the slight difference that we allow $R_\pm$ to have closed components
and that
 do not require that every boundary component of $M$ contains a suture.)
\item
$(M,\gamma)$ is said to be \emph{taut} if $M$ is irreducible and if $R_-$ and $R_+$ have minimal complexity among all surfaces representing
the homology class $[R_-]=[R_+]\in H_2(M,\gamma;\Z)$.
\en

The following well-known lemma says that given a taut sutured manifold $(M,\g)$ in almost all cases the surfaces $R_\pm(\g)$
are incompressible.

\begin{lemma}\label{lem:tautinc}
Let $(M,\gamma)$ be a taut sutured manifold. Then one of the following holds:
\bn
\item $R_-$ and $R_+$ are incompressible, or
\item $M=S^1\times D^2$ and $\g$ is a union of annuli of the form $[-1,1]\times \partial D^2$, or
\item $M=D^3$ and $M$ has at least two sutures.
\en
\end{lemma}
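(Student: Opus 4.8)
The plan is to argue by contradiction: suppose $(M,\gamma)$ is taut but, say, $R_-$ is compressible, and produce a surface of strictly smaller complexity in the class $[R_-]\in H_2(M,\gamma;\Z)$, thereby contradicting tautness — unless we are in one of the two exceptional cases $M=S^1\times D^2$ or $M=D^3$. Note first that by definition of tautness $M$ is irreducible, so every embedded $2$-sphere in $M$ bounds a ball; we will use this repeatedly. Also, since $[R_+]=[R_-]$ and $\chi(R_+)=\chi(R_-)$ in the balanced case, it suffices to handle $R_-$, the argument for $R_+$ being symmetric (one may also invoke the symmetry $(M,\gamma)\mapsto(M,-\gamma)$ which interchanges $R_-$ and $R_+$).

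The core of the argument is a standard surgery move. Let $D\subset M$ be a compressing disk for $R_-$, i.e. $D\cap R_-=\partial D$ and $\partial D$ does not bound a disk in $R_-$. Compress $R_-$ along $D$ to obtain a new properly embedded surface $R'$ with $[R']=[R_-]$ in $H_2(M,\gamma;\Z)$ (compression does not change the homology class, since $R'$ is obtained from $R_-$ by removing an annular neighbourhood of $\partial D$ and gluing in two parallel copies of $D$, and the result is homologous to $R_-$ via the obvious $3$-chain). The Euler characteristic changes by $\chi(R')=\chi(R_-)+2$. The remaining point is to check that this genuinely decreases the complexity $\chi_-$. Write $R_-=R_0\sqcup R_1$ where $R_0$ collects the components with $\chi\le 0$ and $R_1$ those with $\chi>0$ (spheres and disks). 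Spheres bound balls by irreducibility and so can be discarded without changing the homology class; a disk component of $R_-$ would have to be either a component with empty boundary (a sphere, already handled) or a disk with boundary on $\gamma$. If $\partial D$ lies on a component $S$ of $R_-$ with $\chi(S)\le 0$, then after compression the piece(s) of $S$ still have $\chi_-$ strictly smaller (each component of the compressed surface has Euler characteristic $\le$ that of $S$, with total increase $2$, and none becomes a sphere or disk unless $S$ was an annulus or a once-punctured torus — cases one checks by hand), so $\chi_-(R')<\chi_-(R_-)$, contradicting minimality.

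The delicate bookkeeping — and the step I expect to be the main obstacle — is the analysis of the \emph{degenerate} cases where the surgery fails to reduce complexity, and showing these force $M=S^1\times D^2$ or $M=D^3$. If $R_-$ has no component of nonpositive Euler characteristic, then $R_-$ is a union of spheres and disks. Spheres are ruled out as above (or reduce to the $D^3$ case if $R_-$ itself is empty after discarding them and $R_+$ too, forcing $M$ to be a homology ball, hence $D^3$ by irreducibility and the geometrization/Poincaré input, though for the statement one only needs the weaker conclusion in the listed form). If $R_-$ is a nonempty union of disks, then each suture bounds a disk in $R_-$ and symmetrically in $R_+$, and a neighbourhood analysis of $M$ along $\gamma$, together with irreducibility, forces $M=S^1\times D^2$ with $\gamma$ of the stated annular form (the sutures are all parallel copies of $[-1,1]\times\partial D^2$). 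The other degenerate case is when every component of $R_-$ meeting $\partial D$ is an annulus or once-punctured torus: compressing an annulus along an essential disk yields disks, and one runs the disk analysis above; compressing a once-punctured torus yields an annulus, which may then be compressible again, so one iterates the whole procedure — it terminates because $\chi_-$ is a nonnegative integer and each full round either strictly decreases it (contradiction) or lands us in the disk case. Throughout, one must be careful that $\partial D$ could be a curve in $R_-$ that is inessential in $R_-$ but essential in $\partial M$; by definition of "compressing disk" we take $\partial D$ essential in $R_-$, so this does not arise, but one should remark on it. Assembling these cases gives the trichotomy in the statement.

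I would organize the write-up as: (1) reduce to showing $R_-$ incompressible, using irreducibility to kill sphere components and the $(M,-\gamma)$ symmetry; (2) the compression move and the Euler characteristic computation; (3) the generic case, where the move strictly drops $\chi_-$, contradicting tautness; (4) the two degenerate families, identifying them with $S^1\times D^2$ (annular sutures) and $D^3$ (at least two sutures). The cleanest phrasing of step (4) is probably to observe that if $R_-$ consists of disks then a regular neighbourhood of $R_-\cup\gamma\cup R_+$ is already all of $M$ up to the irreducibility correction, so $M$ is a union of two balls glued along disks in their boundary spheres — which is either $D^3$ or $S^1\times D^2$ depending on the suture count.
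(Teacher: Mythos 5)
The paper states this lemma as ``well-known'' and gives no proof, so there is nothing to compare against; I will judge your argument on its own terms. Your opening move is the right one: a compressing disk $D$ for a component $S$ of $R_-$ produces a properly embedded surface $R'$ with $[R']=[R_-]\in H_2(M,\gamma;\Z)$, in which $S$ is replaced by one or two pieces of total Euler characteristic $\chi(S)+2$, none of them a sphere because $\partial D$ is essential in $S$; a short case check then gives $\chi_-(R')<\chi_-(R_-)$ whenever $\chi(S)<0$, contradicting tautness. But you misidentify the degenerate cases. Compression fails to decrease $\chi_-$ exactly when $\chi(S)=0$, i.e.\ when $S$ is an annulus (whose core compresses to two disks) or a closed torus (which compresses to a sphere). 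A once-punctured torus is \emph{not} degenerate: compressing it along a non-separating curve yields a disk, and along an essential boundary-parallel curve yields a disk plus a torus, so $\chi_-$ drops from $1$ to $0$ either way and tautness is already contradicted. In particular your assertion that such a compression ``yields an annulus,'' and the iteration built on it, rest on a miscomputation; and you omit the compressible closed torus case entirely (which, with irreducibility, forces $M$ to be a solid torus with $\gamma=\emptyset$).

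The real content of the lemma is the degenerate annulus case, and your treatment of it does not work. After compressing the annulus $S$ you obtain a surface $R'$ containing two disks, but $R'$ is only a representative of the class $[R_-]$; it is not $R_-(\gamma)$, so ``each suture bounds a disk in $R'$'' says nothing about the sutured structure. Moreover the conclusion you draw --- that $R_-$ being a union of disks forces $M=S^1\times D^2$ --- is false ($D^3$ with one suture has $R_-$ a single disk), and is beside the point anyway, since a union of disks has no essential curves and is therefore incompressible. What is actually needed is a propagation argument: the core of $S$ is isotopic in $\partial M$, across the adjacent annuli of $\gamma$, to boundary curves of components $S'$ of $R_+$; these curves bound disks in $M$, so the same Euler-characteristic count applied to $R_+$ (tautness constrains $R_+$ too) forces each such $S'$ to be a disk or an annulus whose core bounds a disk in $M$; iterating back and forth between $R_-$ and $R_+$, the entire boundary component of $\partial M$ containing $S$ is tiled by disks, compressible annuli and annuli of $\gamma$, hence is a sphere or a compressible torus, and irreducibility then yields $M=D^3$ (with at least two sutures, since $R_-$ contains an annulus) or $M=S^1\times D^2$ with all sutures meridional. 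This step --- the one where the two exceptional manifolds actually emerge --- is missing from your proposal.
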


We conclude this section with two examples of balanced
sutured manifolds.
\bn
\item
 Let $R$ be an oriented surface, then
 $(R\times [-1,1],\partial R\times [-1,1])$ is a sutured manifold with $R_\pm=R\times \{\pm 1\}$.
 We refer to this sutured manifold as a \emph{product sutured manifold}.
\item
Let $Y$ be a   3--manifold with trivial or toroidal boundary.
Let $R \subset Y$ be a properly embedded compact oriented surface such that $Y\sm R$ is connected. Furthermore, if $Y$ has non-empty toroidal boundary we assume that $R$  intersects each boundary torus at least once in a homologically essential curve.
We pick a tubular neighborhood $R\times [-1,1]$ such that if $Y$ has non-empty boundary then we have $\partial (R\times [-1,1])\cap \partial Y=\partial R\times [-1,1]$.
We then define
 \[ Y (R) = (Y\sm R\times (-1,1),\partial Y\sm (\partial R\times (-1,1)))\]
 which is clearly a balanced sutured manifold.
 Note that $R_\pm=R\times \{\pm 1\}$.
If  $Y$ is furthermore irreducible, then it is well-known that  $R$ is Thurston norm minimizing in $Y$ if and only if $Y(R)$ is taut.
(The `only if' direction is obvious and the `if' direction follows from \cite[Corollary~5.3]{Ga83} and \cite[Corollary~2]{Th86}.)
\en

\subsection{Twisted homology groups} \label{section:twihom}
We recall  the definition of twisted homology groups and discuss some of their properties.
More information can for example be found in a previous paper by the authors \cite{FK06}.

Let $X$ be a topological space, $Y\subset X$ a (possibly empty) subset and $x_0\in X$ a point.
Let $\a\colon \pi_1(X,x_0)\to \gl(k,\F)$ be a representation.
This naturally induces a left $\Z[\pi_1(X,x_0)]$--module structure on $\F^k$.

Denote by $\ti{X}$ the set of all homotopy classes of paths starting at $x_0$ with the usual
topology. Then the evaluation map $p\colon \ti{X}\to X$ is the universal cover of $X$. Note that $g\in
\pi_1(X,x_0)$ naturally acts on $\ti{X}$ on the right by precomposing any path by $g^{-1}$.

Given $Y\subset X$ we let $\ti{Y}=p^{-1}(Y)\subset \ti{X}$. Then the above $\pi_1(X,x_0)$ action on
$\ti{X}$ gives rise  to a right $\Z[\pi_1(X,x_0)]$--module structure on the chain groups
$C_*(\ti{X}),C_*(\ti{Y})$ and $C_*(\ti{X},\ti{Y})$. Therefore we can form the tensor product over
$\Z[\pi_1(X,x_0)]$ with $\F^k$, we define
\[ \ba{rcl}
H_i^{\a}(X;\F^k)&=&H_i(C_*(\ti{X})\otimes_{\Z[\pi_1(X,x_0)]}\F^k)),\\
H_i^\a(Y\subset X;\F^k)&=&H_i(C_*(\ti{Y})\otimes_{\Z[\pi_1(X,x_0)]}\F^k),\\
H_i^{\a}(X,Y;\F^k)&=&H_i(C_*(\ti{X},\ti{Y})\otimes_{\Z[\pi_1(X,x_0)]}\F^k)). \ea\]

 Note that if we have
inclusions $Z\subset Y\subset X$ then we get an inclusion induced map $H_i^\a(Z\subset X;\F^k)\to
H_i^\a(Y\subset X;\F^k)$. Also note that we have an exact sequence of complexes
\[ 0 \to C_i(\ti{Y})\otimes_{\Z[\pi_1(X,x_0)]}\F^k\to
C_i(\ti{X})\otimes_{\Z[\pi_1(X,x_0)]}\F^k\to C_i(\ti{X},\ti{Y})\otimes_{\Z[\pi_1(X,x_0)]}\F^k\to
0\] which gives rise to a long exact sequence \be \label{longexact} \dots \to H_i^{\a}(Y\subset
X;\F^k)\to H_i^{\a}(X;\F^k)\to H_i^{\a}(X,Y;\F^k)\to \dots .\ee

Now denote by $Y_i,i\in I,$ the path connected components of $Y$. Pick base points $y_i\in Y_i,i\in
I,$ and paths $\g_i:[0,1]\to X$ with $\g_i(0)=y_i$ and $\g_i(1)=x_0$. Then we  get induced
representations $\a_i(\g_i):\pi_1(Y_i,y_i)\to \pi_1(X,y_i)\to \pi_1(X,x_0)\to \gl(k,\F)$ and induced
homology groups $H_j^{\a_i(\g_i)}(Y_i;\F^k)$ using the universal cover of $Y_i$.

In \cite{FK06} we proved the following lemma:

\begin{lemma} \label{lem:twiiso}
Given $\g_i$ there exists a canonical  isomorphism
\[ H_j^{\a_i}(Y_i\subset X;\F^k)\cong  H_j^{\a_i(\g_i)}(Y_i;\F^k).\]
\end{lemma}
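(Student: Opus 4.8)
The plan is to compare the two chain complexes computing the left-hand and right-hand sides and show they are canonically chain isomorphic, equivariantly with respect to the relevant group actions. Fix $i\in I$ and the path $\g_i$ from $y_i$ to $x_0$. On one side, $H_j^{\a_i}(Y_i\subset X;\F^k)$ is the homology of $C_*(\ti{Y_i})\otimes_{\Z[\pi_1(X,x_0)]}\F^k$, where $\ti{Y_i}=p^{-1}(Y_i)$ for the universal cover $p\colon\ti X\to X$; on the other, $H_j^{\a_i(\g_i)}(Y_i;\F^k)$ is the homology of $C_*(\widehat{Y_i})\otimes_{\Z[\pi_1(Y_i,y_i)]}\F^k$, where $\widehat{Y_i}$ is the universal cover of $Y_i$ built from homotopy classes of paths in $Y_i$ starting at $y_i$, and $\F^k$ carries the $\pi_1(Y_i,y_i)$-module structure pulled back along $\a_i(\g_i)=\a\circ(\g_i)_*\circ(\iota_i)_*$. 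The key point is that $\ti{Y_i}$, as a covering space of $Y_i$, need not be connected, but each of its components is a copy of the universal cover $\widehat{Y_i}$.

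First I would analyze the covering space $p|\colon\ti{Y_i}\to Y_i$. Its components are indexed by the cosets $\pi_1(X,x_0)/\im\bigl(\pi_1(Y_i,y_i)\to\pi_1(X,x_0)\bigr)$ — more precisely, using $\g_i$ to identify $\pi_1(Y_i,y_i)$ with a subgroup $G_i\subset\pi_1(X,x_0)$, the set of components is $\pi_1(X,x_0)/G_i$, and the right $\pi_1(X,x_0)$-action on $\ti{Y_i}$ permutes these components via right multiplication on cosets while $G_i$ itself acts on the component containing the lift $\ti{y_i}$ of $y_i$ determined by $\g_i$. Concretely, the chosen path $\g_i$ picks out a preferred lift $\ti{y_i}\in\ti X$ of $y_i$ (the homotopy class of $\g_i$ read as a path from $x_0$), hence a preferred component $\ti{Y_i}^0\subset\ti{Y_i}$, and there is a canonical $G_i$-equivariant homeomorphism $\widehat{Y_i}\xrightarrow{\ \cong\ }\ti{Y_i}^0$ sending a path-class $[\omega]$ in $Y_i$ to $[\g_i\cdot\omega^{-1}]$ (or the appropriate concatenation); this is exactly the standard identification of the universal cover of a subspace with a component of the restricted universal cover. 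Consequently $C_*(\ti{Y_i})\cong C_*(\widehat{Y_i})\otimes_{\Z[G_i]}\Z[\pi_1(X,x_0)]$ as right $\Z[\pi_1(X,x_0)]$-modules, the induced module.

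Next I would feed this into the tensor product. Using the standard change-of-rings isomorphism
\[
\bigl(C_*(\widehat{Y_i})\otimes_{\Z[G_i]}\Z[\pi_1(X,x_0)]\bigr)\otimes_{\Z[\pi_1(X,x_0)]}\F^k \;\cong\; C_*(\widehat{Y_i})\otimes_{\Z[G_i]}\F^k,
\]
where on the right $\F^k$ is a $\Z[G_i]$-module by restriction along $G_i\hookrightarrow\pi_1(X,x_0)\xrightarrow{\a}\gl(k,\F)$ — which is precisely the representation $\a_i(\g_i)$ — one obtains a chain isomorphism of the two complexes, hence the desired isomorphism on homology in every degree $j$. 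All of these identifications are natural once $\g_i$ is fixed, so the resulting isomorphism is canonical, as claimed.

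The main obstacle, and the only genuinely delicate bookkeeping, is keeping the left/right module conventions and the basepoint-versus-path data consistent: one must check that the component-indexing and the $G_i$-action on the preferred component are compatible with the \emph{right} $\pi_1(X,x_0)$-action on $\ti X$ (which acts by precomposition with $g^{-1}$), so that the induced-module identification $C_*(\ti{Y_i})\cong C_*(\widehat{Y_i})\otimes_{\Z[G_i]}\Z[\pi_1(X,x_0)]$ is correct as \emph{right} modules, and that the change-of-rings isomorphism then lands on the module structure coming from $\a_i(\g_i)$ rather than some conjugate of it. Once the conventions are pinned down, the argument is the formal one above; I would cite \cite{FK06} for the detailed verification.
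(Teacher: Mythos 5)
The paper itself gives no proof here (it refers to \cite{FK06}), but your overall strategy --- identify $C_*(\ti{Y_i})$ as an induced module and then apply the change-of-rings isomorphism --- is exactly the right one and is what the cited argument amounts to. However, there is a genuine gap in the geometric input: you assert that each component of $\ti{Y_i}=p^{-1}(Y_i)$ is a copy of the universal cover $\widehat{Y_i}$, and that $\g_i$ identifies $\pi_1(Y_i,y_i)$ with a subgroup $G_i\subset\pi_1(X,x_0)$. Both statements are true only when $\pi_1(Y_i,y_i)\to\pi_1(X,x_0)$ is injective. In general each component of $\ti{Y_i}$ is the cover of $Y_i$ corresponding to $K_i:=\Ker\bigl(\pi_1(Y_i,y_i)\to\pi_1(X,x_0)\bigr)$, i.e.\ the quotient $\widehat{Y_i}/K_i$, not $\widehat{Y_i}$ itself; and $G_i$ is a quotient of $\pi_1(Y_i,y_i)$, so $C_*(\widehat{Y_i})$ is not a $\Z[G_i]$-module and your formulas $C_*(\widehat{Y_i})\otimes_{\Z[G_i]}\Z[\pi_1(X,x_0)]$ and $C_*(\widehat{Y_i})\otimes_{\Z[G_i]}\F^k$ are not defined. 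This is not a harmless edge case for this paper: the lemma is invoked for $S\subset M$ and $M_\pm\subset M$ in the proof of Theorem \ref{mainthmonlyif}, where $\pi_1$-injectivity is certainly not available.

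The repair is local and keeps your architecture intact. Work throughout over $\Z[\pi_1(Y_i,y_i)]$, viewing $\Z[\pi_1(X,x_0)]$ as a $\bigl(\Z[\pi_1(Y_i,y_i)],\Z[\pi_1(X,x_0)]\bigr)$-bimodule via the (possibly non-injective) homomorphism determined by $\g_i$. Then the correct induced-module statement, valid in general, is
\[
C_*(\ti{Y_i})\;\cong\;C_*(\widehat{Y_i})\otimes_{\Z[\pi_1(Y_i,y_i)]}\Z[\pi_1(X,x_0)]
\]
as right $\Z[\pi_1(X,x_0)]$-modules: the components of $\ti{Y_i}$ are indexed by $\pi_1(X,x_0)/\im\bigl(\pi_1(Y_i,y_i)\to\pi_1(X,x_0)\bigr)$, and the component containing the preferred lift of $y_i$ is $\widehat{Y_i}/K_i$, whose chain complex is $C_*(\widehat{Y_i})\otimes_{\Z[K_i]}\Z$; summing over cosets assembles into the displayed induced module. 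Associativity of the tensor product then gives $C_*(\ti{Y_i})\otimes_{\Z[\pi_1(X,x_0)]}\F^k\cong C_*(\widehat{Y_i})\otimes_{\Z[\pi_1(Y_i,y_i)]}\F^k$, where the $\pi_1(Y_i,y_i)$-module structure on $\F^k$ is precisely $\a_i(\g_i)$; this is literally the complex computing $H_j^{\a_i(\g_i)}(Y_i;\F^k)$, so no identification of $\pi_1(Y_i,y_i)$ with a subgroup is needed. Your remarks about the right-action and basepoint conventions remain the only bookkeeping to check.
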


The isomorphism type of $H_j^\a(X,Y;\F^k)$ does not depend on the choice of the
base point. In most situations we can and will therefore suppress the base point in the notation
and the arguments. We will also normally write $\a$ instead of $\a(\g_i)$. Furthermore we write
$H_j^\a(Y;\F^k)$ for $\oplus_{i\in I}H_j^\a(Y_i;\F^k)$. With these conventions  the long exact sequence
(\ref{longexact}) induces a long exact sequence
\[ \dots \to H_j^\a(Y;\F^k)\to H_j^\a(X;\F^k) \to H_j^\a(X,Y;\F^k)\to \dots.\]

For the proof of Theorem \ref{mainthm} we will need the following well-known results on twisted homology groups:

\begin{lemma} \label{lem:h0surjects}
Let $(X,Y)$ be a  pair of  spaces  with $X$ path connected and with $Y\ne \emptyset$.
Let  $\a\colon \pi_1(X)\to \gl(k,\F)$ be a representation. Then
\[  H_0(X,Y;\F^k)=0.\]
\end{lemma}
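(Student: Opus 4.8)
The plan is to reduce the statement to a purely algebraic computation on the chain level, using the long exact sequence of the pair together with the fact that $H_0^\a(Y\subset X;\F^k)$ surjects onto $H_0^\a(X;\F^k)$ because $Y$ is nonempty and $X$ is path connected. Concretely, from the long exact sequence
\[ H_0^\a(Y;\F^k)\to H_0^\a(X;\F^k)\to H_0^\a(X,Y;\F^k)\to 0 \]
(the last term being zero since $H_{-1}$ vanishes), it suffices to show that the first map is surjective.

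First I would recall the standard description of $H_0$ with twisted coefficients: for a path connected space $Z$ with $\pi_1(Z)\to\gl(k,\F)$ given by a representation $\beta$, one has $H_0^\beta(Z;\F^k)\cong \F^k/\langle (\beta(g)-\id)v : g\in\pi_1(Z), v\in\F^k\rangle$, the coinvariants of the action. This follows by picking the universal cover $\ti Z$, noting $C_0(\ti Z)\otimes_{\Z[\pi_1]}\F^k$ and $C_1(\ti Z)\otimes_{\Z[\pi_1]}\F^k$ and computing the cokernel of $\partial_1$; the generators of the image are exactly of the form $(\id-\beta(g))v$ coming from lifts of loops at the basepoint. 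Since the map $H_0^\a(Y\subset X;\F^k)\to H_0^\a(X;\F^k)$ is induced by the inclusion of a nonempty subset into a path connected space, on the level of these quotients it is induced by sending the class of a $0$-cell of $\ti Y$ to the class of a $0$-cell of $\ti X$; because $X$ is path connected, the single $0$-cell upstairs (equivalently, $\F^k$ before quotienting) generates $H_0^\a(X;\F^k)$, and any $0$-cell of $\ti Y$ maps to a generator, so the map is onto.

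An even cleaner route, which I would probably take to keep the argument short, is to work directly with $C_*(\ti X,\ti Y)$. Since $Y\neq\emptyset$, choose a $0$-cell $e$ of $X$ lying in $Y$ with a lift $\ti e$ in $\ti Y$; then in $C_0(\ti X,\ti Y)$ this lift is zero. Given an arbitrary $0$-cell $v$ of $X$ with lift $\ti v$ in $\ti X$, path connectedness of $X$ gives a $1$-chain in $\ti X$ with boundary $\ti v - g\cdot\ti e$ for some $g\in\pi_1(X)$; passing to $C_*(\ti X,\ti Y)\otimes_{\Z[\pi_1]}\F^k$, the term $g\cdot\ti e\otimes w$ dies, so every generator $\ti v\otimes w$ of $C_0(\ti X,\ti Y)\otimes\F^k$ is a boundary. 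Hence $H_0^\a(X,Y;\F^k)=0$.

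The only mild subtlety—and the step I would be most careful about—is the bookkeeping of left versus right module structures and the identification of $H_0^{\a_i}(Y_i\subset X;\F^k)$ with $H_0^{\a_i(\g_i)}(Y_i;\F^k)$ via Lemma \ref{lem:twiiso}, so that the connecting maps in the long exact sequence are genuinely the inclusion-induced ones; but this is exactly the content already set up in Section \ref{section:twihom}, so no new difficulty arises. Everything else is the routine observation that twisted $H_0$ of a path connected space is a quotient of $\F^k$ and that an inclusion of a nonempty subset induces a surjection on $H_0$, mirroring the untwisted case.
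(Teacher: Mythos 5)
Your proposal is correct and your first route is essentially the paper's own argument: the paper identifies $H_0$ with the coinvariants $\F^k/\{\a(g)v-v\}$, observes that the inclusion-induced map on $H_0$ is therefore surjective, and concludes via the long exact sequence of the pair. Your second, chain-level route (every $0$-simplex of $\ti X$ is homologous rel $\ti Y$ to one lying over $Y$, since $\ti X$ is path connected) is a harmless direct reformulation of the same fact and is equally valid.
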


For the reader's convenience we provide a quick proof of Lemma \ref{lem:h0surjects}.

\begin{proof}
Let $A$  be a group and let $\a\colon A\to \gl(k,\F)$ be a representation.
Let  $\varphi\colon B\to A$ be a
group homomorphism. By \cite[Section~VI.3]{HS97} we obtain the  following  commutative diagram of exact sequences
\[  \ba{cccccccccc}
0&\to&\{ \a(\varphi(b))v-v | b\in B, v\in \F^k\}&\to&\F^k&\to& H_0^{\a \circ\v}(B;\F^k)&\to&0\\
&&\downarrow&&\downarrow&&\downarrow \\
0&\to&\{ \a(a)v-v | a\in A, v\in \F^k\}&\to&\F^k&\to& H_0^\a(A;\F^k)&\to&0. \ea
\]
Note that  the vertical map on the left is
injective. It follows that $H_0^{\a \circ \varphi}(B;\F^k)\to H_0^\a(A;\F^k)$ is surjective.
The lemma is an immediate consequence of this fact.
\end{proof}

We also recall the following  well-known duality theorem  (see e.g. \cite[Theorem~2.1]{CF10} and \cite[Lemma~2.3]{FK06} for a proof).

\begin{theorem}\label{thm:duality}
Let $X$ be an $n$--manifold together with a decomposition $\partial X=Y_1\cup Y_2$ where $Y_1$ and $Y_2$ are submanifolds of $\partial X$ with $\partial Y_1=\partial Y_2$.
Let  $\a\colon \pi_1(X)\to \gl(k,\F)$ be a representation.
We denote by $\a^\dagger$ the representation which is given by $(\a^\dagger)(g):=\a(g^{-1})^t$ for $g\in \pi_1(X)$.
 Then
\[
H_{n-i}^\a(X,Y_1;\F^k)\cong {H_{i}^{\a^\dagger}(X,Y_2;\F^k)}.
\]
\end{theorem}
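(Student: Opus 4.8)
The statement to prove is Theorem~\ref{thm:duality}, the twisted Poincar\'e--Lefschetz duality. Let me sketch a proof plan.

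\bigskip

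The plan is to deduce twisted Poincar\'e--Lefschetz duality from the untwisted version applied to the universal cover, combined with the standard manipulation of chain complexes over the group ring $\Z\pi:=\Z[\pi_1(X)]$. First I would recall the setup: let $p\colon\ti X\to X$ be the universal cover, and write $C_*(\ti X)$, $C_*(\ti Y_1)$, $C_*(\ti X,\ti Y_1)$ for the cellular (or singular) chain complexes, regarded as complexes of free right $\Z\pi$--modules via the deck action. The key point is that, since $X$ is a compact $n$--manifold with the stated boundary decomposition, the cap product with a fundamental class gives a chain homotopy equivalence of $\Z\pi$--chain complexes
\[
C_{n-*}^{\,lf}(\ti X,\ti Y_2)\;\simeq\;C_*(\ti X,\ti Y_1),
\]
where the left-hand side is built from the cochains of $(\ti X,\ti Y_2)$. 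Because $X$ is compact, $\ti X$ has a $\pi$--equivariant finite CW structure, so these cochain complexes are honestly $\hom_{\Z\pi}(C_*(\ti X,\ti Y_2),\Z\pi)$ up to the usual reindexing, and everything is finitely generated free over $\Z\pi$.

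\bigskip

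Next I would carry out the change of rings. Tensoring the above equivalence with $\F^k$ over $\Z\pi$ (using the left module structure on $\F^k$ coming from $\a$) and taking homology yields, on the right, exactly $H_*^\a(X,Y_1;\F^k)$ by definition. On the left one gets the homology of $\hom_{\Z\pi}(C_*(\ti X,\ti Y_2),\Z\pi)\otimes_{\Z\pi}\F^k$ in degree $n-i$; since $C_*(\ti X,\ti Y_2)$ is a bounded complex of finitely generated free $\Z\pi$--modules, the natural map
\[
\hom_{\Z\pi}\!\big(C_*(\ti X,\ti Y_2),\Z\pi\big)\otimes_{\Z\pi}\F^k
\;\longrightarrow\;
\hom_{\Z\pi}\!\big(C_*(\ti X,\ti Y_2),\F^k\big)
\]
is an isomorphism of complexes. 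Here $\F^k$ carries the $\Z\pi$--bimodule structure needed to make sense of both sides; the right module structure on the target $\F^k$ is what converts $\a$ into $\a^\dagger$, since $\hom$ into a module over a non-commutative ring forces one to pass to the transpose-inverse representation to restore a left action. Identifying $\hom_{\Z\pi}(C_*(\ti X,\ti Y_2),\F^k)$ with the cochain complex computing twisted cohomology $H^{n-i}_{\a}(X,Y_2;\F^k)$, and then invoking the universal coefficient spectral sequence — which degenerates here because $\F$ is a field, so $H^j_\a(X,Y_2;\F^k)\cong \hom_\F(H_j^{\a^\dagger}(X,Y_2;\F^k),\F)$ up to the appropriate dualization — gives the claimed isomorphism $H_{n-i}^\a(X,Y_1;\F^k)\cong H_i^{\a^\dagger}(X,Y_2;\F^k)$.

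\bigskip

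The main obstacle, and the step that needs the most care, is the bookkeeping of left versus right module structures and the precise reason the dual representation $\a^\dagger$ appears rather than $\a$ itself. Concretely: $\F^k$ is naturally a left $\Z\pi$--module via $\a$, but to form $C_*(\ti X)\otimes_{\Z\pi}\F^k$ we use $C_*(\ti X)$ as a \emph{right} module; dually, $\hom_{\Z\pi}(C_*(\ti X),\F^k)$ requires viewing $C_*(\ti X)$ as a left module and then the transpose $\a(g^{-1})^t$ is exactly what makes the two conventions compatible. I would handle this by fixing, once and for all, the involution $g\mapsto g^{-1}$ on $\Z\pi$ that interchanges left and right modules, checking that under this involution the $\a$--twisted structure on $\F^k$ becomes the $\a^\dagger$--twisted one (this is a one-line matrix computation: $(\a(g^{-1}))^t$ is exactly the matrix of the transpose action), and then citing the cited references \cite[Theorem~2.1]{CF10} and \cite[Lemma~2.3]{FK06} for the remaining diagram-chase if a self-contained verification would be too long. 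The only geometric input is the existence of the equivariant fundamental class and the resulting duality chain equivalence on $\ti X$, which is classical (e.g. from the handle decomposition of the compact manifold $X$).
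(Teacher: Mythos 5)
Your proposal is correct and is essentially the standard argument that the paper itself delegates to the cited references \cite[Theorem~2.1]{CF10} and \cite[Lemma~2.3]{FK06} (the paper gives no proof of its own): equivariant Poincar\'e--Lefschetz duality on the universal cover, the change-of-rings isomorphism $\hom_{\Z\pi}(C,\Z\pi)\otimes_{\Z\pi}\F^k\cong\hom_{\Z\pi}(C,\F^k)$ for finitely generated free complexes, and the universal coefficient theorem over the field $\F$, with $\a^\dagger$ arising from the left/right module bookkeeping exactly as you describe. The only hypotheses worth making explicit are that $X$ is compact and oriented, which is what guarantees the equivariant fundamental class and the finite free $\Z\pi$--chain complexes you use; these hold in all applications in the paper.
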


We obtain the following corollary:

\begin{corollary}\label{cor:h0surjects}
Let $M$ be a $3$-manifold, $S$ a non-trivial proper subsurface of $\partial M$. Let $\a\colon \pi_1(M)\to \gl(k,\F)$ be a representation, then
\[ H_0^\a(M,S;\F^k)= H_3^\a(M,S;\F^k)=0.\]
\end{corollary}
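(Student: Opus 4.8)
The plan is to deduce both vanishing statements from the results already assembled in the excerpt, namely Lemma \ref{lem:h0surjects} and the duality Theorem \ref{thm:duality}. For the statement $H_0^\a(M,S;\F^k)=0$, I would simply apply Lemma \ref{lem:h0surjects} with the pair $(X,Y)=(M,S)$: since $M$ is connected (our standing convention) and $S$ is non-trivial, in particular $S\ne\emptyset$, the hypotheses of that lemma are met, and it yields $H_0^\a(M,S;\F^k)=0$ directly.

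For the statement $H_3^\a(M,S;\F^k)=0$, the plan is to dualize. Write $\partial M = S\cup S'$, where $S'$ denotes the closure of $\partial M\sm S$; since $S$ is a proper subsurface, $S'$ is non-empty, and since $S$ is non-trivial, $S'$ is a proper (hence non-trivial) subsurface as well, with $\partial S = \partial S'$. Theorem \ref{thm:duality} applied to the $3$-manifold $M$ with this boundary decomposition and the dual representation gives
\[
H_3^\a(M,S;\F^k)\cong H_0^{\a^\dagger}(M,S';\F^k).
\]
Now $H_0^{\a^\dagger}(M,S';\F^k)=0$ by Lemma \ref{lem:h0surjects} again, since $M$ is connected and $S'\ne\emptyset$. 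Combining the two isomorphisms finishes the proof.

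The only point that requires a word of care — and the step I would call the main (very minor) obstacle — is checking that the boundary decomposition $\partial M = S\cup S'$ legitimately satisfies the hypotheses of Theorem \ref{thm:duality}: one needs $S$ and $S'$ to be codimension-zero submanifolds of $\partial M$ meeting exactly along their common boundary $\partial S=\partial S'$. This is automatic once one takes $S'$ to be the metric closure of $\partial M\sm S$, using that $S$ is a proper subsurface of $\partial M$. Everything else is a formal two-line chain of citations, so no genuine computation is involved.
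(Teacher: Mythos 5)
Your proposal is correct and follows exactly the same route as the paper: apply Lemma \ref{lem:h0surjects} directly to the pair $(M,S)$ for the $H_0$ vanishing, then use Theorem \ref{thm:duality} with the decomposition $\partial M=S\cup S'$ (where $S'$ is the closure of $\partial M\sm S$) to reduce $H_3^\a(M,S;\F^k)$ to $H_0^{\a^\dagger}(M,S';\F^k)$, which vanishes by the same lemma. Nothing further is needed.
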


\begin{proof}
It follows immediately from  Lemma \ref{lem:h0surjects} that $H_0^\a(M,S;\F^k)=0$. Let $T$ be the closure of $\partial M\sm S$. We then apply Theorem \ref{thm:duality} to $\partial M=S\cup T$ and we see that
\[ H_3^\a(M,S;\F^k)\cong H_0^{\a^\dagger}(M,T;\F^k),\]
which in turn is zero by Lemma \ref{lem:h0surjects}.
\end{proof}

Given a pair of CW-complexes $(X,Y)$ and a representation $\a\colon \pi_1(X)\to \gl(k,\F)$  we now write
\[ b_i^\a(X,Y;\F^k)=\dim H_i^\a(X,Y;\F^k) \mbox{ and } \chi^\a(X,Y;\F^k)=\sum_i (-1)^ib_i^\a(X,Y;\F^k).\]
When $\a$ is understood we will drop it from the notation.
A standard argument (see e.g. \cite{FK06}) shows the following  lemma.

\begin{lemma} \label{lemmab1} \label{lem:euler}
Let $(X,Y)$ be a pair of CW-complexes. Let  $\a\colon \pi_1(X)\to
\gl(k,\F)$ be a representation, then
\[ \chi^\a(X,Y;\F^k)=k\chi(X,Y). \]
\end{lemma}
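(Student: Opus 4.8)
The plan is to prove the identity at the level of chain complexes rather than homology, exploiting that the cellular chain complex of the universal cover is free over $\Z[\pi_1(X)]$ with ranks recorded by the cell counts of $X$.

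First I would fix a base point $x_0$, write $\pi=\pi_1(X,x_0)$, and recall that since $p\colon\ti X\to X$ is the universal cover, every open cell of $X$ is evenly covered. Choosing one lift of each cell of $X$, the group $\pi$ permutes the cells of $\ti X$ freely with these lifts as orbit representatives, so $C_i(\ti X)$ is a free right $\Z[\pi]$-module whose rank equals the number $c_i(X)$ of $i$-cells of $X$; the same applies to $C_i(\ti Y)$, with rank $c_i(Y)$. Since the inclusion $C_i(\ti Y)\hookrightarrow C_i(\ti X)$ realizes $C_i(\ti Y)$ as the span of the lifts of those cells of $X$ lying in $Y$, the quotient $C_i(\ti X,\ti Y)=C_i(\ti X)/C_i(\ti Y)$ is again free over $\Z[\pi]$, of rank $c_i(X,Y):=c_i(X)-c_i(Y)$, the number of $i$-cells of $X$ not contained in $Y$.

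Next I would apply $-\otimes_{\Z[\pi]}\F^k$ and observe that a free module $\Z[\pi]^{c}$ becomes $(\F^k)^{c}$, an $\F$-vector space of dimension $kc$. Hence the complex computing $H_*^\a(X,Y;\F^k)$ has $i$-th term of $\F$-dimension $k\,c_i(X,Y)$. Assuming $(X,Y)$ is a finite CW pair, which is the situation in all our applications, this is a finite complex of finite-dimensional vector spaces, so its Euler characteristic equals both the alternating sum of dimensions of its homology, which is $\chi^\a(X,Y;\F^k)$, and the alternating sum of dimensions of its chain groups, which is $\sum_i(-1)^i k\,c_i(X,Y)=k\chi(X,Y)$. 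Comparing the two expressions yields the claim.

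I do not expect a genuine obstacle here; the only point deserving attention is the finiteness hypothesis, which is what legitimizes passing between the chain-level and homology-level Euler characteristics. For a complex $X$ with infinitely many cells one should restrict to the case of finitely many cells in each dimension and only finitely many nonzero, which holds automatically for the compact $3$-manifolds considered in this paper; everything else is the standard bookkeeping above.
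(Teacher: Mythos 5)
Your proof is correct and is precisely the standard argument the paper alludes to (the paper gives no proof, citing it as "a standard argument"): the chain groups of the universal cover are free $\Z[\pi_1(X)]$-modules of ranks given by the cell counts, so after tensoring with $\F^k$ the chain-level Euler characteristic is $k\chi(X,Y)$, and for a finite complex this agrees with the homological Euler characteristic. Your remark on the finiteness hypothesis is the right caveat and is harmless in the paper's setting of compact manifolds.
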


\subsection{Twisted invariants and product manifolds}
If $(M,\gamma)$ is a product sutured manifold, then it is obvious that  $H_1(M,R_\pm;\F^k)=0$ for any representation $\a\colon \pi_1(M)\to \gl(k,\F)$.
We will now see that the converse to that statement is an easy consequence of work of Long and Niblo \cite{LN91}.

\begin{lemma}\label{lem:product}
Let $(M,\gamma)$ be a taut sutured manifold
with $M\ne D^3$. If $(M,\gamma)$ is not a product sutured manifold, then there exists a unitary representation
$\a\colon \pi_1(M)\to U(k)$ such that
$H_1(M,R_-;\C^k)\ne 0$.
\end{lemma}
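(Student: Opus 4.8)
The plan is to produce a nontrivial first twisted homology group by exploiting the fact that a taut sutured manifold which is not a product has a fundamental group that "sees" the difference between $R_-$ and $M$. The key external input is the theorem of Long and Niblo \cite{LN91}: if $M$ is a compact $3$-manifold with incompressible boundary and $R \subset \partial M$ is an incompressible subsurface with $\pi_1(R) \to \pi_1(M)$ \emph{not} surjective, then this subgroup is \emph{separable} in $\pi_1(M)$; in particular there is a finite-index subgroup $G \le \pi_1(M)$ containing $\pi_1(R_-)$ (up to conjugacy) but not equal to $\pi_1(M)$. First I would reduce to the case where $R_-$ is incompressible: by Lemma~\ref{lem:tautinc}, since $M \ne D^3$, either $R_\pm$ are incompressible, or $M = S^1 \times D^2$ with product-type annular sutures — but in that latter case $(M,\gamma)$ is a product sutured manifold, contrary to hypothesis. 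So $R_-$ (and $R_+$) are incompressible.

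Next I would argue that $\pi_1(R_-) \to \pi_1(M)$ is not surjective. If it were, then a standard argument (the mapping cylinder of $R_- \hookrightarrow M$, or an Euler characteristic / covering-space comparison) would force $M$ to be a product over $R_-$: indeed, if $R_-$ is incompressible, $M$ is irreducible, and $\pi_1(R_-) \to \pi_1(M)$ is an isomorphism, then $M$ is homeomorphic to $R_- \times [-1,1]$ by a theorem essentially due to Stallings (or one can run the sutured-manifold-decomposition argument). Since $(M,\gamma)$ is assumed not to be a product, $\pi_1(R_-) \to \pi_1(M)$ must fail to be surjective. (One must handle the case $\pi_1(M)$ finite or $\pi_1(R_-)$ of infinite index but still equal to $\pi_1(M)$ carefully; but an incompressible surface subgroup that is onto is automatically finite index, hence all of $\pi_1(M)$ only if the surface is closed and equal to $\partial M$, the product-annulus case.)

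Now, applying \cite{LN91}, let $G \le \pi_1(M)$ be a proper finite-index subgroup with $\pi_1(R_-) \subseteq G$ (after conjugating). Let $p \colon \widehat{M} \to M$ be the corresponding finite cover of degree $d = [\pi_1(M):G] \ge 2$, and let $\widehat{R}_- = p^{-1}(R_-)$; the condition $\pi_1(R_-) \subseteq G$ means $\widehat{R}_-$ has a component mapping homeomorphically to $R_-$, so in particular $\widehat{R}_- \ne \emptyset$ and, more to the point, the number of components of $\widehat{R}_-$ over $R_-$ reflects the index of $\pi_1(R_-)$ in $G$. Consider the \emph{regular} representation: let $\a \colon \pi_1(M) \to \GL(d,\C)$ be the permutation representation on the cosets $\pi_1(M)/G$ (which is unitary, landing in the permutation matrices $\subset U(d)$). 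Then by Shapiro's lemma / the standard identification of twisted homology with a coset representation, $H_i^\a(M, R_-; \C^d) \cong H_i(\widehat{M}, \widehat{R}_-; \C)$. I would then compute with ordinary homology of the cover: using Lemma~\ref{lem:euler}, $\chi(\widehat{M},\widehat{R}_-) = d \cdot \chi(M,R_-)$; and since $(M,\gamma)$ is balanced, $\chi(R_-) = \chi(R_+)$, so $\chi(M,R_-) = \chi(M) - \chi(R_-) = \tfrac12\chi(\partial M) - \chi(R_-) = \tfrac12(\chi(R_-)+\chi(R_+)) + (\text{suture contribution}) - \chi(R_-) = 0$ (the annular sutures contribute $0$). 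Hence $\chi(\widehat{M},\widehat{R}_-) = 0$ as well, so $b_0 - b_1 + b_2 - b_3 = 0$ for the pair $(\widehat{M},\widehat{R}_-)$.

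Finally I would pin down the Betti numbers. By Corollary~\ref{cor:h0surjects} applied to the cover (or directly, since $\widehat{R}_-$ is a nonempty proper subsurface of $\partial \widehat{M}$), $H_0(\widehat{M},\widehat{R}_-;\C) = H_3(\widehat{M},\widehat{R}_-;\C) = 0$. So the Euler characteristic relation collapses to $b_1 = b_2$, i.e. $H_1(\widehat{M},\widehat{R}_-;\C) \cong H_1^\a(M,R_-;\C^d)$ vanishes if and only if $H_2$ does. To break the tie I would show $H_2(\widehat{M},\widehat{R}_-;\C) \ne 0$: by Poincaré–Lefschetz duality (Theorem~\ref{thm:duality}) this equals $H_1(\widehat{M},\widehat{R}_+;\C)$ up to conjugate representation, and by the long exact sequence of the pair together with a half-lives-half-dies dimension count, a nonvanishing of one of these is forced unless $\widehat{R}_-$ carries all of $H_1(\widehat{M})$ — which would say $\pi_1(\widehat{R}_-) \to \pi_1(\widehat{M})$ is $\pi_1$-surjective on abelianizations. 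The cleanest route, and the one I would actually write, is: if $H_1^\a(M,R_-;\C^d) = 0$ then $H_1^\a(M,R_+;\C^d) = 0$ too by duality, and then by the ``if'' direction's machinery (or directly by Lemma~\ref{lem:tautinc} plus an argument that vanishing twisted $H_1$ relative to both $R_\pm$ forces a product structure on the cover, which descends) $\widehat{M}$ would be a product sutured manifold; but a finite cover of a non-product taut sutured manifold with the sutures pulled back is again non-product (a product would force $M$ itself to be finitely covered by a product, contradicting irreducibility and $\pi_1(R_-) \ne \pi_1(M)$ being a surface subgroup of infinite index). This contradiction gives $H_1^\a(M,R_-;\C^d) \ne 0$.

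\textbf{Main obstacle.} The delicate point is the last step: ruling out $H_1^\a(M,R_-;\C^d) = 0$, equivalently showing the cover $\widehat{M}$ (with pulled-back sutures) is not a product sutured manifold. The Long–Niblo separability result hands us a proper cover with a component of $\widehat{R}_-$ mapping homeomorphically to $R_-$, but I need to see that $\widehat{R}_-$ does \emph{not} fill up a product structure for $\widehat{M}$; the geometric input is that $\chi(\widehat R_-) = d\,\chi(R_-)$ while $\widehat R_-$ has strictly more than one component over $R_-$ when $[\pi_1(M):\pi_1(R_-)] = \infty$ (which holds since $R_-$ incompressible and not $\pi_1$-surjective), so the extra components of $\widehat R_-$ are "wasted" and $\widehat M$ cannot be $\widehat R_- \times [-1,1]$ for Euler-characteristic or connectivity reasons. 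Making this comparison airtight — especially when $R_-$ has closed components or when $\pi_1(R_-) = 1$ (disk components) — is where the real care is needed; everything else is the formal machinery of coset representations, Shapiro's lemma, and the duality and Euler-characteristic lemmas already available.
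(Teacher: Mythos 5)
Your overall strategy (reduce to $R_-$ incompressible, show $\pi_1(R_-)\to\pi_1(M)$ is not surjective, invoke Long--Niblo separability, and use the resulting finite permutation/regular representation) is exactly the paper's, but the proof does not close: the step you yourself flag as the ``main obstacle'' is the entire content of the lemma, and it is left unresolved. Your route computes $\chi^\a(M,R_-;\C^d)=0$ and $b_0=b_3=0$, reduces to $b_1=b_2$, and then tries to show $b_2\ne 0$ by duality, half-lives-half-dies, and a ``the cover is not a product'' argument --- none of which is carried out, and the last of which is circular (deducing non-vanishing of twisted $H_1$ from non-productness is precisely the statement being proved). The paper avoids all of this with a one-line $H_0$ count: if $\a\colon\pi_1(M)\to G$ is a finite quotient with $\a(\pi_1(R_-))$ a \emph{proper} subgroup of $\a(\pi_1(M))$, then
\[ \dim H_0(R_-;\C[G])=\frac{|G|}{|\im(\pi_1(R_-)\to G)|}>\frac{|G|}{|\im(\pi_1(M)\to G)|}=\dim H_0(M;\C[G]),\]
so the map $H_0(R_-;\C[G])\to H_0(M;\C[G])$ has nontrivial kernel, and exactness of $H_1(M,R_-;\C[G])\to H_0(R_-;\C[G])\to H_0(M;\C[G])\to 0$ forces $H_1(M,R_-;\C[G])\ne 0$. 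No Euler characteristics, duality, or balancedness are needed --- and note that your argument also illegitimately assumes $(M,\g)$ is balanced, which is not a hypothesis of the lemma.

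Two smaller but genuine errors in your reduction. First, in case (2) of Lemma~\ref{lem:tautinc} the sutures on $M=S^1\times D^2$ are \emph{meridional} (annuli of the form $[-1,1]\times\partial D^2$ with $[-1,1]\subset S^1$), so $R_-$ is an annulus whose core bounds a disk in $M$; this is \emph{not} a product sutured manifold, contrary to your claim, so you cannot discard it as ``contrary to hypothesis.'' It must be treated separately, and there the trivial representation already gives $H_1(M,R_-;\C)\ne 0$ since $H_1(R_-;\C)\to H_1(M;\C)$ is the zero map. Second, you never address disconnected $R_-$; the paper dispatches it first (the trivial representation gives $H_1(M,R_-;\Z)\ne 0$), and only then appeals to Hempel to get non-surjectivity of $\pi_1(R_-)\to\pi_1(M)$ in the connected incompressible case.
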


\begin{proof}
Let $(M,\gamma)$ be a taut sutured manifold.
First note that if  $R_-$ is not connected, then $H_1(M,R_-;\Z)\ne 0$,
i.e. the trivial representation already gives rise to non-trivial homology.

Now suppose that $R_-$ is connected.
It follows from Lemma \ref{lem:tautinc}  that either $M=S^1\times D^2$ and $R_-=I\times \partial D^2$
or $R_-$ is incompressible.
In the former case it is clear that $H_1(M,R_-;\C)\ne 0$, i.e. the trivial representation has the desired property.

If $(M,\g)$ is not a product sutured manifold, then it follows from \cite[Theorem~10.5]{He76} that the map $\pi_1(R_-)\to \pi_1(M)$ is not surjective.
By \cite[Theorem~1]{LN91} the subgroup $\pi_1(R_-)\subset \pi_1(M)$ is separable,
in particular there exists an epimorphism
$\a\colon \pi_1(M)\to G$ to a finite group such that $\a(\pi_1(R_-))$ is strictly contained in $\a(\pi_1(M))$. We then consider the long exact sequence
\[ \to H_1(M,R_-;\C[G])\to H_0(R_-;\C[G])\to H_0(M;\C[G]) \to 0.\]
Note that
\[ \dim H_0(R_-;\C[G])=\frac{|G|}{|\im(\pi_1(R_-)\to G)|}>\frac{|G|}{|\im(\pi_1(M)\to G)|}=H_0(M;\C[G]).\]
It thus follows that  $H_1(M,R_-;\C[G])\ne 0$. Finally note that the representation $\a\colon \pi_1(M)\to G\to \aut(\C[G])\cong \gl(|G|,\C)$ is unitary with respect to the standard basis of $\C[G]$. The representation $\a$ thus has the desired property.
\end{proof}

\begin{remark}
Let $N$ be a $3$-manifold with empty or toroidal boundary and let $R\subset N$ be a Thurston norm minimizing surface.
We write $M=N\sm \nu R$.
Note that $R$ is a fiber of a fibration $N\to S^1$ if and only if $N(R)=(M,\gamma)$ is a product sutured manifold. Then the following hold:
\bn
\item By Lemma \ref{lem:product} the twisted invariants of $(M,\gamma)$ corresponding to representations of $\pi_1(M)$ can
detect whether $(M,\gamma)$ is a product sutured manifold.
\item
In \cite{FV11,FV12b} it is shown that twisted Alexander polynomials corresponding to representations of $\pi_1(N)$ can detect whether $R$ is a fiber or not.
\en
These results are related. In fact one can show fairly easily that (2) implies (1). On the other hand (1) does not readily imply (2)
since a representation of $\pi_1(M)$ does not necessarily extend to a representation of $\pi_1(N)$.
\end{remark}

\section{Proof of the `if' direction of Theorem \ref{mainthm}}\label{section:mainthmonlyif}

\subsection{Relationship between the rank of twisted homology and complexities of surfaces}

Given a sutured manifold $(M,\g)$ we say that a surface $S$ is \emph{properly embedded in $(M,\g)$} if $\partial S=S\cap \g$.
We now define the \emph{complexity  of $(M,\gamma)$} to be
\[
x(M,\gamma)=\min \{ \chi_-(S)\, | \, S \mbox{ properly embedded surface with $[S]=[R_-]\in H_2(M,\gamma;\Z)$}\}.
\]
Note that if $(M,\gamma)$ is taut, then by definition we have $x(M,\gamma)=\chi_-(R_-)=\chi_-(R_+)$.
Our main technical theorem of this section is the following result:

\begin{theorem}\label{mainthmonlyif}
Let $(M,\gamma)$ be  an irreducible sutured manifold such  that $R_\pm$ have no disk components.
Let  $\a\colon \pi_1(M)\to \gl(k,\F)$ be a representation.
Then the following inequality holds:
\[ \dim\, H_1(M,R_-;\F^k)+\dim\,  H_1(M,R_+;\F^k)\geq k\big(\chi_-(R_+)+\chi_-(R_-)-2x(M,\gamma)\big).\]
\end{theorem}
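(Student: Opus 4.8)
The plan is to reduce the inequality to a statement about a single Thurston-norm-minimizing (in the appropriate relative sense) surface $S$ with $[S]=[R_-]\in H_2(M,\gamma;\Z)$ and $\chi_-(S)=x(M,\gamma)$, and then to compare the twisted homology of the pieces obtained by cutting $M$ along $S$. First I would choose such a minimal surface $S$, arrange (after discarding closed components that bound, and isotoping) that $S$ has no sphere or disk components and meets $\gamma$ efficiently, and observe that cutting $M$ along $S$ produces a sutured manifold with two copies $S_-,S_+$ of $S$ on its boundary. The key geometric input is that $M$ cut along $S$, together with the parts of $R_\pm$ on its boundary, gives a cobordism from $R_-\cup S_+$-type pieces that can be reassembled: more precisely, $M$ is recovered by gluing $S_-$ to $S_+$, so there is a Mayer--Vietoris sequence relating $H_*^\a$ of $M$ and of $M_S:=M\sm \nu S$.

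The main computation is then an Euler-characteristic bookkeeping argument combined with the half-lives-half-dies phenomenon. Concretely, I would apply Lemma \ref{lem:euler} to the pairs $(M,R_-)$ and $(M_S, R_-\cup S_+)$ (and the analogous ones with $R_+$), using $\chi(M,R_\pm)$ and $\chi(M_S,\cdots)=\chi(M,R_\pm)-\chi(S)$, to turn the statement into a comparison of $b_1$ and $b_2$ terms. The point is that $H_1(M,R_-;\F^k)$ receives, via the long exact sequence of the triple or via Mayer--Vietoris, a large contribution governed by $-\chi(S)=\chi_-(S)$ (since $S$ has no disk or sphere components), while the ``error'' terms are controlled by the complexities $\chi_-(R_\pm)$ of $R_\pm$. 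Adding the $R_-$ and $R_+$ estimates symmetrizes the $S_-$ versus $S_+$ asymmetry and yields the factor $\chi_-(R_+)+\chi_-(R_-)-2x(M,\gamma)$ on the right, with the factor $k$ coming from the rank of the coefficient module as in Lemma \ref{lem:euler}.

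The step I expect to be the main obstacle is controlling the homology of the minimal surface $S$ itself inside the exact sequences: a priori $H_1^\a(S;\F^k)$ and $H_1^\a(M_S;\F^k)$ could be large and could cancel contributions rather than add them. The resolution should be to use irreducibility of $M$ and the fact that $S$ is complexity-minimizing to show the relevant connecting maps are injective or surjective where needed (an incompressibility/no-disk-components input, analogous to Lemma \ref{lem:tautinc}), so that the inequality only ever loses rank in a controlled direction. A secondary technical nuisance is the bookkeeping when $S$ (or $R_\pm$) has closed components or components with $\chi\ge 0$ that do not contribute to $\chi_-$; these must be handled so that $\chi_-$ rather than $\chi$ appears throughout, which is exactly why the hypothesis ``$R_\pm$ have no disk components'' and the passage to a surface $S$ without sphere/disk components are essential.
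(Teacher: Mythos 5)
Your outline has the right flavor---cut along a complexity-minimizing surface $S$, do Euler-characteristic bookkeeping with Lemma \ref{lem:euler}, and use vanishing of $H_0$ and $H_3$ of the relevant pairs---but it leaves the central step unresolved, and the fix you propose for it would not work. The first missing idea is the choice of $S$: since $[S]=[R_-]\in H_2(M,\gamma;\Z)$, one can take $S$ to be the subcollection of components of a minimal surface $T$ bounding the union $Z$ of the components of $M\setminus T$ that meet $R_-$; then $M$ cut along $S$ is \emph{disconnected} into $M_-\supset R_-$ and $M_+\supset R_+$, with $\chi_-(S)=x(M,\gamma)$ still. Your picture of a single $M_S$ with two copies $S_\pm$ of $S$ reglued (a non-separating, Mayer--Vietoris setup) is not the right decomposition: the argument needs the excision isomorphism $H_*(M,M_-)\cong H_*(M_+,S)$, which is what lets the contributions $\chi(M_\pm,R_\pm)-\chi(M_\pm,S)=k(\chi(S)-\chi(R_\pm))$ from the two sides add up to $k(2\chi(S)-\chi(R_-)-\chi(R_+))$.

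The second, more serious issue is your proposed resolution of what you correctly identify as the main obstacle. You suggest using irreducibility and minimality of $S$ to show that connecting maps such as $H_1^\a(S;\F^k)\to H_1^\a(M_S;\F^k)$ are injective or surjective via an incompressibility argument. For twisted coefficients this is false in general: $\pi_1$-injectivity of an incompressible surface does not imply injectivity on (twisted, or even untwisted) first homology, and no such statement is available. The point of the actual argument is that no injectivity is needed: comparing the long exact sequences of $(M_-,R_-)$ and $(M,R_-)$, and then the sequence of the pair $(M,M_-)$, every potentially uncontrolled term ($\dim\ker(H_1(R_-)\to H_1(M_\pm))$, $\dim\ker(H_2(M,M_-)\to H_1(M_-))$, and $b_2(M_\pm,R_\pm)$) enters the inequality with a favorable sign and can simply be discarded, using only Corollary \ref{cor:h0surjects} (vanishing of $H_0$ and $H_3$ of the pairs, which requires $R_\pm$ and $S$ to be non-empty proper subsurfaces of the boundary). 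Irreducibility is used only at the very end, to rule out sphere components of $R_\pm$ so that $\chi_-(R_\pm)=-\chi(R_\pm)$; the ``half-lives-half-dies'' phenomenon plays no role. As written, your proof has a genuine gap at exactly the step you flagged.
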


The following corollary is now a slight generalization of the `if' direction of Theorem \ref{mainthm}.

\begin{corollary}\label{maincor2}
Let $(M,\gamma)$ be  an irreducible balanced sutured manifold and let $\F$ be a field with involution. Assume there
exists a unitary representation $\a\colon \pi_1(M)\to \gl(k,\F)$  such that  $H_1(M,R_-;\F^k)=0$.  Then $(M,\gamma)$ is taut.
\end{corollary}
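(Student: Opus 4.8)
The plan is to deduce Corollary \ref{maincor2} from Theorem \ref{mainthmonlyif}, which provides the inequality
\[ \dim H_1(M,R_-;\F^k)+\dim H_1(M,R_+;\F^k)\geq k\big(\chi_-(R_+)+\chi_-(R_-)-2x(M,\gamma)\big).\]
First I would dispose of the cases where $R_\pm$ have disk components so that Theorem \ref{mainthmonlyif} applies. If some component of $R_-$ is a disk $D$, then $D$ is either compressible or it is a boundary-parallel disk; in either case one can modify the sutured manifold (removing an inessential disk component, or noting $M=D^3$ and the statement is about tautness which can be checked directly) without affecting the twisted homology hypothesis. Since $(M,\gamma)$ is balanced, $\chi(R_+)=\chi(R_-)$, and a short combinatorial argument shows that after removing sphere and disk components (which for a balanced, irreducible $M$ are constrained) we may assume $R_\pm$ have no disk components, while tautness of the original manifold is equivalent to tautness of the reduced one. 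The essential observation is that for an irreducible $M$ that is not $S^1\times D^2$ or $D^3$, the surfaces $R_\pm$ minimizing complexity can be taken without disk or sphere components.

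Next I would exploit the unitarity hypothesis. Because $\a\colon\pi_1(M)\to\gl(k,\F)$ is unitary with respect to the involution on $\F$, the dual representation $\a^\dagger$ defined by $\a^\dagger(g)=\a(g^{-1})^t$ is isomorphic (after conjugating by the Hermitian form, i.e. applying the involution entrywise) to $\ol{\a}$, and in particular $\dim H_i^{\a^\dagger}(X,Y;\F^k)=\dim H_i^{\a}(X,Y;\F^k)$ for all pairs. Now decompose $\partial M = R_-\cup (s\times[-1,1])\cup R_+$; applying Theorem \ref{thm:duality} with $Y_1=R_-$ and $Y_2$ the closure of $\partial M\setminus R_-$, which deformation retracts onto $R_+$, gives
\[ H_1^{\a}(M,R_+;\F^k)\cong H_2^{\a^\dagger}(M,R_-;\F^k),\]
so $\dim H_1^{\a}(M,R_+;\F^k)=\dim H_2^{\a}(M,R_-;\F^k)$. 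Combined with the hypothesis $H_1^{\a}(M,R_-;\F^k)=0$ and with Corollary \ref{cor:h0surjects} (which gives $H_0^{\a}(M,R_-;\F^k)=H_3^{\a}(M,R_-;\F^k)=0$), Lemma \ref{lem:euler} then yields
\[ \dim H_2^{\a}(M,R_-;\F^k) = -\chi^\a(M,R_-;\F^k) = -k\,\chi(M,R_-).\]

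Then I would feed these identities into the inequality of Theorem \ref{mainthmonlyif}. The left-hand side becomes $0 + \dim H_1^{\a}(M,R_+;\F^k) = -k\,\chi(M,R_-)$. Using the long exact sequence of the pair and the cobordism structure, $\chi(M,R_-)=\chi(M)-\chi(R_-)$, and since $M$ is a cobordism from $R_-$ to $R_+$ with $\chi(M)=\chi(R_-)=\chi(R_+)$ in the balanced case — more precisely $\chi(M,R_-) = \chi(R_+)-\chi(s\times[-1,1]\text{-contribution})$, which I would compute carefully — one finds $-k\chi(M,R_-)$ equals $k\chi_-(R_-)$ when $R_-$ has no disk or sphere components (so $\chi_-(R_-)=-\chi(R_-)$) and similarly for $R_+$. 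The inequality thus reads $k\chi_-(R_-) \geq k(\chi_-(R_+)+\chi_-(R_-)-2x(M,\gamma))$, i.e. $2x(M,\gamma)\geq \chi_-(R_+)+\chi_-(R_-)=2\chi_-(R_-)$ (using balancedness to equate the two complexities), hence $x(M,\gamma)\geq \chi_-(R_-)$. But $x(M,\gamma)\leq \chi_-(R_-)$ always, since $R_-$ itself is a properly embedded surface in class $[R_-]$; therefore $x(M,\gamma)=\chi_-(R_-)$, which together with the irreducibility hypothesis is precisely the statement that $(M,\gamma)$ is taut.

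The main obstacle I anticipate is the bookkeeping of Euler characteristics and the reduction to the disk-free, sphere-free case: one must check that for an irreducible balanced sutured manifold the hypothesis $H_1^{\a}(M,R_-;\C^k)=0$ forces $R_\pm$ to behave well (e.g. a priori $R_-$ could have a sphere component, but then $H_2$ would be large and $M$ would fail to be taut anyway — this needs the irreducibility), and that $\chi_-(R_\pm)=-\chi(R_\pm)$ exactly when there are no sphere or disk components, so that the crude inequality of Theorem \ref{mainthmonlyif} converts cleanly into the sharp equality $x(M,\gamma)=\chi_-(R_-)$. The unitarity, by contrast, is used only to guarantee $\dim H_i^{\a}=\dim H_i^{\a^\dagger}$, which is the routine input that makes Poincaré–Lefschetz duality numerically usable here.
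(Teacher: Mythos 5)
Your overall strategy is the paper's: use unitarity plus Theorem \ref{thm:duality} to transfer the vanishing of $H_1^\a(M,R_-;\F^k)$ into information about $H_*^\a(M,R_+;\F^k)$, then feed both into Theorem \ref{mainthmonlyif}. But two steps do not go through as written.

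First, the Euler characteristic bookkeeping is wrong in a way that breaks the final inequality. Since $(M,\gamma)$ is balanced, $\chi(M)=\tfrac12\chi(\partial M)=\tfrac12(\chi(R_-)+\chi(R_+))=\chi(R_-)$, so $\chi(M,R_-)=0$; combined with the vanishing of $H_0$, $H_1$ and $H_3$ this gives $\dim H_2^\a(M,R_-;\F^k)=0$, i.e.\ $H_1^\a(M,R_+;\F^k)=0$ exactly --- this is the paper's Claim. You instead assert that the left-hand side of Theorem \ref{mainthmonlyif} equals $-k\chi(M,R_-)=k\chi_-(R_-)$, which is false, and then from the resulting inequality $k\chi_-(R_-)\geq k(\chi_-(R_+)+\chi_-(R_-)-2x(M,\gamma))$ you claim to deduce $2x(M,\gamma)\geq\chi_-(R_+)+\chi_-(R_-)$; that deduction is also invalid (it only yields $2x(M,\gamma)\geq\chi_-(R_+)$, which is not enough). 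The two errors happen to land on the correct conclusion, but the argument as stated is not correct; you need the left-hand side to be $0$, which the correct computation provides.

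Second, the reduction to the case where $R_\pm$ have no disk components is a genuine gap. You propose to ``remove'' a disk component of $R_-$ and claim this affects neither the hypothesis nor the conclusion, but deleting a component of $R_-$ changes the sutured structure $\gamma$, changes the class $[R_-]\in H_2(M,\gamma;\Z)$, changes $H_1^\a(M,R_-;\F^k)$, and tautness must be established for the original $(M,\gamma)$. Note also that the corollary does not exclude $M=D^3$ or $M=S^1\times D^2$, so these cases cannot be dismissed by fiat. The paper's actual argument in the disk case is quite different: from $H_1^\a(M,R_-;\F^k)=0$ the map $H_0^\a(D;\F^k)\to H_0^\a(M;\F^k)$ is injective, which forces $\a$ to be trivial; then $H_1(M,R_-;\Z)=0$ forces $R_-$ to be connected, hence a single disk with $\chi_-(R_-)=0$, and a parallel argument (using $H_1^\a(M,R_+;\F^k)=0$) handles $R_+$. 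Some argument of this kind is needed; the proposed modification of the sutured manifold does not supply it.
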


\begin{proof}
We start out with the following claim:

\begin{claim}
\[H_1(M,R_+;\F^k)=0.\]
\end{claim}

It follows from Theorem \ref{thm:duality} and the fact that $\a$ is a unitary representation
that
\be \label{equ:h2alpha}  H_{2}^\a(M,R_+;\F^k)=H_1^{\a^\dagger}(M,R_-;\F^k)=\ol{H_1^{\a}(M,R_-;\F^k)}=0,\ee
where given an $\F$-vector space $V$ we denote by $\ol{V}$ the vector space given by the same underlying abelian group but with involuted $\F$-multiplication.
On the other hand it follows from the assumption that $(M,\gamma)$ is balanced and from Poincar\'e duality that
\[ \chi(M)=\frac{1}{2}\chi(\partial M)=\frac{1}{2}(\chi(R_-)+\chi(R_+))=\chi(R_+).\]
This implies by Lemma \ref{lem:euler} that
\[ \chi(M,R_+;\F^k)=k\chi(M,R_+)=k(\chi(M)-\chi(R_+))=0.\]
It now follows from (\ref{equ:h2alpha}) and Corollary \ref{cor:h0surjects} that $ H_{1}(M,R_+;\F^k)=0$.
This concludes the proof of the claim.

We first  suppose that $R_-$ and $R_+$ have no disk components.
Since $\chi_-(R_\pm)\geq x(M,\gamma)$, it follows from Theorem \ref{mainthmonlyif}, the assumption that $H_1(M,R_-;\F^k)=0$ and the above claim that
\[ x(M,\gamma)=\chi_-(R_-)=\chi_-(R_+).\]
Since $M$ is furthermore assumed to be irreducible it follows that $M$ is taut.
This concludes the proof of the corollary if  $R_-$ and $R_+$ have no disk components.

We now consider the case  that $R_-$ or $R_+$ has a disk component. Without loss of generality we assume that $R_-$ has a disk component $D$.
 Since $H_1(M,R_-;\F^k)=0$ it follows from the long exact  sequence of the pair $(M,R_-)$ that the map
\[  H_0(R_-;\F^k)\to H_0(M;\F^k)\]
is injective. In particular $H_0(D;\F^k)\to H_0(M;\F^k)$ is injective.
It follows from  \cite[Section~VI.3]{HS97} that we have  the  following  commutative diagram of exact sequences
\[ \ba{cccccccccc}
0&\to&0&\to&\F^k&\to& H_0^{\a}(D;\F^k)&\to&0\\
&&\downarrow&&\downarrow&&\downarrow \\
0&\to&\{ \a(a)v-v | a\in \pi_1(M), v\in \F^k\}&\to&\F^k&\to& H_0^\a(M;\F^k)&\to&0. \ea\]
We see that the map on the right is injective only if $\a$ is the trivial representation.
We thus showed that $H_1(M,R_-;\Z)=0$. Since $M$ is connected this is only possible if $R_-$ is also connected,
i.e. $R_-$ is a disk. It follows that $\chi_-(R_-)=0$.
Now it suffices to show that $\chi_-(R_+)=0$. By the previous claim, we have $H_1(M,R_+;\F^k)=0$. Since $\chi(R_-)=1$ and $M$ is assumed to be irreducible and balanced, one can deduce that $R_+$ also has a disk component. Then using the same argument for $R_-$ as above, one can conclude that $R_+$ is a disk, and it now follows that $\chi_-(R_+)=0$.
\end{proof}

\begin{example}
Let $(M,\gamma)$ be an irreducible balanced sutured manifold such that   $H:=H_1(M;\Z)$ is a torsion-free group. We denote by $Q(H)$ the quotient field of $\Z[H]$. Note that $Q(H)$ is naturally a field with involution and that the homomorphism $\pi_1(M)\to H\to \gl(1,Q(H))$ is a unitary representation.
We can  consider the $\Z[H]$--module $H_1(M,R_-;\Z[H])$ and the
$Q(H)$-module $H_1(M,R_-;Q(H))$.
It follows from Corollary \ref{maincor2} that $(M,\gamma)$ is taut if $H_1(M,R_-;Q(H))=0$.

An alternative proof of this statement can be given using the sutured Floer homology $SFH(M,\g)$ which was introduced by Juh\'asz \cite{Ju06}.
Indeed, the following implications hold:
\[
\ba{rcl}
&& H_1(M,R_-;Q(H))=0 \\
&\Leftrightarrow& H_1(M,R_-;\Z[H]) \mbox{ is $\Z[H]$-torsion} \\
&\Leftrightarrow & \mbox{the Alexander polynomial of $(M,\gamma)$ is non-zero}\\
&\Leftrightarrow& \mbox{the Euler characteristic of $SFH(M,\gamma)$ is non-zero}\\
&\Rightarrow& \mbox{$SFH(M,\gamma)$ is non-zero}\\
&\Leftrightarrow & (M,\gamma) \mbox{ is taut}.\ea \]
Here the first two equivalences follow from elementary algebraic arguments (see e.g. \cite{Tu01}),
the third equivalence was proved in \cite{FJR11}, the fourth statement is elementary
and the last equivalence was proved by  Juh\'asz (see \cite[Theorem~9.18]{Ju06} and \cite[Theorem~1.4]{Ju08}).
\end{example}

\subsection{Proof of Theorem \ref{mainthmonlyif}}

We now give a proof of Theorem \ref{mainthmonlyif}.
Let $(M,\gamma)$ be  an irreducible  sutured manifold  such that $R_\pm$ have no disk components.
Let $\a\colon \pi_1(M)\to \gl(k,\F)$ be a representation.
We have to show that
\[ \dim\, H_1(M,R_-;\F^k)+\dim\,  H_1(M,R_+;\F^k)\geq k\big(\chi_-(R_+)+\chi_-(R_-)-2x(M,\gamma)\big).\]
We start out with the following claim.

\begin{claim}
There exists a properly embedded surface  $S\subset M\sm (R_-\cup R_+)$ with $\chi_-(S)=x(M,\gamma)$
  and such that $M$ cut along $S$ is the union of two disjoint (not necessarily connected)
manifolds $M_\pm$ such that $R_\pm\subset \partial M_\pm$.
\end{claim}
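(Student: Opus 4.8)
The plan is to construct the surface $S$ directly from a complexity-minimizing representative of the class $[R_-] = [R_+] \in H_2(M,\gamma;\Z)$, together with the product regions near $R_\pm$, and then use an innermost/outermost argument to push $S$ off $R_-$ and $R_+$. First I would pick a properly embedded surface $S_0$ in $(M,\gamma)$ with $[S_0] = [R_-]$ and $\chi_-(S_0) = x(M,\gamma)$. Since $[S_0] = [R_-]$ in $H_2(M,\gamma;\Z)$, the difference $[S_0] - [R_-]$ bounds, and more usefully, $S_0$ and $R_-$ are homologous rel $\gamma$, so $S_0 \cup (-R_-)$ separates $M$; by choosing $S_0$ to be disjoint from the collar neighborhoods of $R_-$ and $R_+$ (which we may arrange by a small isotopy, since $S_0$ meets $\partial M$ only in $\gamma$) we obtain a surface lying in $M \sm (R_- \cup R_+)$.

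The key point is that $S_0$ should cut $M$ into two pieces, one containing $R_-$ in its boundary and the other containing $R_+$. To see this, note that $M \sm \nu(R_- \cup R_+)$ has boundary consisting of $R_-$, $R_+$, parts of $\gamma$, and the push-offs; since $[S_0]$ and $[R_-] = [R_+]$ agree in $H_2(M,\gamma)$, the class $[S_0]$ is Poincaré–Lefschetz dual to a cohomology class $\phi \in H^1(M, \gamma_+; \Z)$ (suitably interpreted) that evaluates consistently, and cutting along $S_0$ yields a cobordism. More concretely, I would argue that the two "sides" of $S_0$ near the cut are distinguished by the coorientation induced from $[R_-]$, and since $R_-$ and $R_+$ lie on opposite ends of this coorientation, they fall into the two components $M_-$ and $M_+$ obtained by cutting. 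One must check $R_\pm \subset \partial M_\pm$: this follows because the coorientation on $S_0$ (inherited from $R_-$) points "toward $R_+$," so the $R_+$ side of the cut is exactly $M_+$ and likewise for $R_-$.

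The main obstacle — and the step I expect to require the most care — is ensuring that $S_0$ can be chosen disjoint from $R_- \cup R_+$ (as properly embedded surfaces in the closed-up manifold) while preserving both the complexity bound $\chi_-(S) = x(M,\gamma)$ and the separation property. If $S_0$ intersects $R_-$, one performs cut-and-paste / double-curve-sum surgery along the curves of intersection $S_0 \cap R_-$: because $R_-$ is a boundary-parallel surface (it sits in $\partial M$), an innermost-disk and outermost-arc argument lets us surger $S_0$ along $R_-$ without increasing $\chi_-$, using that $M$ is irreducible to cap off sphere components and that $R_\pm$ have no disk components to rule out spurious disk pieces. I would handle intersections with $R_-$ and $R_+$ one at a time, always staying in the class $[R_-]$ and never increasing complexity, until $S$ is disjoint from both; irreducibility of $M$ guarantees the surgered surface remains in the same homology class and the resulting $S$ still realizes $x(M,\gamma)$. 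Finally, cutting $M$ along the now-embedded $S$ and collecting the components according to which of $R_\pm$ they abut gives the desired decomposition $M_\pm$ with $R_\pm \subset \partial M_\pm$.
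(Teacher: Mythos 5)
Your proposal goes wrong at the step you treat as the easy one, and spends most of its effort on a non-issue. First, the difficulty you devote the bulk of the argument to --- removing intersections of $S_0$ with $R_-\cup R_+$ by cut-and-paste surgery --- does not arise: by definition a surface properly embedded in $(M,\g)$ satisfies $\partial S_0=S_0\cap \g$, so $S_0$ already misses the interiors of $R_\pm$, and a small isotopy (pushing $\partial S_0$ off the circles $s\times\{\pm1\}$ inside $\g$ and pushing $S_0$ out of the collars of $R_\pm$) is all that is needed; the surgery ``along the boundary surface $R_-$'' that you describe is unnecessary and not really well posed. The genuine gap is the assertion that $S_0$ cuts $M$ into exactly two pieces with $R_-$ on one side and $R_+$ on the other. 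Your coorientation heuristic does establish the correct separation fact --- no component of $M\sm S_0$ meets both $R_-$ and $R_+$, because any arc from $R_-$ to $R_+$ has algebraic intersection number $\pm 1$ with every surface representing $[R_-]\in H_2(M,\g;\Z)$ --- but it does not show that there are only two complementary pieces, and in general there are not: a minimizing representative may contain, for example, a null-homologous torus component bounding a solid torus in the interior of $M$, which creates a complementary region meeting neither $R_-$ nor $R_+$ without changing $\chi_-$ or the homology class. ``Collecting the components according to which of $R_\pm$ they abut'' leaves such regions unassigned and, more importantly for how the claim is used afterwards, need not produce a decomposition $M=M_-\cup M_+$ with $M_-\cap M_+=S$, which is exactly what the later excision step $H_*(M,M_-)\cong H_*(M_+,S)$ requires.

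The missing idea is to modify the surface, not merely the grouping of complementary pieces. Let $Z$ be the union of the closures of the components of $M\sm S_0$ that meet $R_-$, and replace $S_0$ by its frontier $S=\ol{\partial Z\sm\partial M}$, setting $M_-=Z$ and $M_+=\ol{M\sm Z}$; then $M_-\cap M_+=S$ and $R_\pm\subset\partial M_\pm$ by the separation fact above. The point that makes this legitimate is a two-sided complexity estimate: $S$ is a union of components of $S_0$, so $\chi_-(S)\le\chi_-(S_0)=x(M,\g)$, while $\partial Z=-R_-\cup F_-\cup S$ with $F_-\subset\g$ shows that $[S]=[R_-]\in H_2(M,\g;\Z)$, whence $\chi_-(S)\ge x(M,\g)$. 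Without this step your surface need not satisfy the conclusion of the claim in the form the rest of the proof of Theorem \ref{mainthmonlyif} needs.
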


Let $T$ be  a properly embedded surface in $(M,\gamma)$  which realizes $x(M,\gamma)$ and which is disjoint from $R_-\cup R_+$.
We denote by  $X=\{X_i\}_{i\in I}$  the set of components of $M\sm T$.
We now define $Z$ to be the union of the closures of all $X_i$ which intersect $R_-$ non-trivially.
We define
\[S:=\ol{\partial Z \sm \partial M}.\]
Note that for an appropriate orientation of $S$ we have $\partial Z=-R_-\cup F_-\cup S$ where $F_-$ is a subsurface of $\g$.
In particular with this orientation we have that  $[S]$ is homologous to $[R_-]\in H_2(M,F_-;\Z)$.
We now write $M_-=Z$ and $M_+=\ol{M\sm Z}$. Note that $\partial M_+=-R_+\cup F_+\cup S$ where $F_+$ is a subsurface of $\g$.
Finally note that $S$ is a union of components  of $T$, hence $x(M,\gamma)\leq \chi_-(S)\leq \chi_-(T)=x(M,\gamma)$.
It is now clear that $S$ has the desired properties. This concludes the proof of the claim.

We continue with $S,M_-,M_+$ as in the claim.
We will now use the conventions of Section \ref{section:twihom}, i.e. we will write
\[ H_i^\a(S;\fkt)=H_i^\a(S\subset M;\fkt)\mbox{ and } H_i^\a(M_\pm;\fkt)=H_i^\a(M_\pm\subset M;\fkt).\]
These groups in particular are well-defined even if $S$ and $M_\pm$ are disconnected.
We will now prove the following claim.

\begin{claim}
We have the following equality
\[  k(\chi(S)-\chi(R_-))=\chi(M_-,R_-;\F^k)-\chi(M_-,S;\F^k)\]
and similarly for the ``$+$'' subscript.
\end{claim}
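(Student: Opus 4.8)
The plan is to prove the claim by an Euler-characteristic bookkeeping argument, using the additivity of twisted Euler characteristics under excision and the fact that twisted Euler characteristics are just $k$ times ordinary ones (Lemma \ref{lem:euler}). First I would observe that by Lemma \ref{lem:euler}, every twisted Euler characteristic appearing in the claim equals $k$ times its untwisted counterpart, so it suffices to prove the purely topological identity
\[ \chi(S)-\chi(R_-)=\chi(M_-,R_-)-\chi(M_-,S).\]
This in turn rearranges to $\chi(M_-,S)+\chi(S)=\chi(M_-,R_-)+\chi(R_-)$, i.e.\ $\chi(M_-)=\chi(M_-)$ once one uses the long exact sequences of the pairs $(M_-,S)$ and $(M_-,R_-)$, which give $\chi(M_-,S)=\chi(M_-)-\chi(S)$ and $\chi(M_-,R_-)=\chi(M_-)-\chi(R_-)$. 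So the untwisted statement is a triviality; the only real content is that the \emph{twisted} version also reduces to this.

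The cleaner way to organize the write-up is therefore: invoke Lemma \ref{lem:euler} to get $\chi^\a(M_-,R_-;\F^k)=k\chi(M_-,R_-)$ and $\chi^\a(M_-,S;\F^k)=k\chi(M_-,S)$; then use the long exact sequence of the pair (the version stated right after Lemma \ref{lem:twiiso}, or simply ordinary CW homology since Euler characteristics of complexes and of their homology agree) to write $\chi(M_-,R_-)=\chi(M_-)-\chi(R_-)$ and $\chi(M_-,S)=\chi(M_-)-\chi(S)$. Subtracting, the $\chi(M_-)$ terms cancel and one is left with $\chi(M_-,R_-;\F^k)-\chi(M_-,S;\F^k)=k(\chi(S)-\chi(R_-))$, which is exactly the claimed equality. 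The argument for the ``$+$'' subscript is verbatim the same with $M_-,R_-$ replaced by $M_+,R_+$.

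One point I would be slightly careful about is that $M_-$, $S$, and $R_-$ may all be disconnected, so strictly speaking Lemma \ref{lem:euler} is being applied componentwise and the relevant representations are the restrictions $\a_i(\g_i)$ of Section \ref{section:twihom}; but since Euler characteristic is additive over connected components and each component contributes $k$ times its ordinary Euler characteristic regardless of which representation is restricted to it, this causes no difficulty — it is exactly the situation the conventions of Section \ref{section:twihom} were set up to handle. I do not anticipate any genuine obstacle here: the claim is essentially a formal consequence of additivity of Euler characteristics together with Lemma \ref{lem:euler}, and the main thing to get right is just the signs and the cancellation of the $\chi(M_-)$ term.

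\begin{proof}
Since the homology of a finite chain complex has the same Euler characteristic as the complex itself, the long exact sequence of the pair $(M_-,R_-)$ for the representation $\a$ gives
\[ \chi^\a(M_-,R_-;\F^k)=\chi^\a(M_-;\F^k)-\chi^\a(R_-;\F^k),\]
and likewise $\chi^\a(M_-,S;\F^k)=\chi^\a(M_-;\F^k)-\chi^\a(S;\F^k)$ (here we use the conventions of Section \ref{section:twihom}, applying Lemma \ref{lem:twiiso} componentwise to $S$ and $R_-$). Subtracting the two identities, the term $\chi^\a(M_-;\F^k)$ cancels and we obtain
\[ \chi^\a(M_-,R_-;\F^k)-\chi^\a(M_-,S;\F^k)=\chi^\a(S;\F^k)-\chi^\a(R_-;\F^k).\]
By Lemma \ref{lem:euler} the right-hand side equals $k\chi(S)-k\chi(R_-)=k(\chi(S)-\chi(R_-))$, which proves the first equality of the claim. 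The argument for the ``$+$'' subscript is identical, with $M_-$, $R_-$ replaced by $M_+$, $R_+$.
\end{proof}
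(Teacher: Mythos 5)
Your proof is correct and is essentially the argument of the paper: both come down to additivity of the Euler characteristic for the pairs $(M_-,R_-)$ and $(M_-,S)$ combined with Lemma \ref{lem:euler}, the only (immaterial) difference being that the paper invokes the additivity $\chi(X,Y)=\chi(X)-\chi(Y)$ at the untwisted level and then applies Lemma \ref{lem:euler} to the pairs, whereas you run the long exact sequence at the twisted level and apply Lemma \ref{lem:euler} to $S$ and $R_-$.
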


We will prove the claim for the ``$-$'' subscript, the other case is proved exactly the same way.
It is well-known that for any pair of spaces $(X,Y)$ we have $\chi(X,Y)=\chi(X)-\chi(Y)$.
It thus follows that
\[ \chi(M_-)=\chi(R_-)+\chi(M_-,R_-) \mbox{ and } \chi(M_-)=\chi(S)+\chi(M_-,S).\]
Subtracting these two terms we conclude that
\[ \chi(R_-)-\chi(S)=\chi(M_-,S)-\chi(M_-,R_-).\]
The claim now follows from Lemma \ref{lem:euler}.

\begin{claim}
We have the following inequality
\[
b_1(M,R_-;\F^k)\geq b_1(M_-,R_-;\F^k)-\chi(M_+,S;\F^k),\]
and similarly with the roles of ``$-$'' and ``$+$'' reversed.
\end{claim}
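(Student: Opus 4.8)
The plan is to use a Mayer--Vietoris type decomposition of $M$ along $S$, combined with the exactness of twisted homology and the fact (Lemma~\ref{lem:twiiso}) that $H_*^\a(S\subset M;\F^k)$ and $H_*^\a(M_\pm\subset M;\F^k)$ compute the ``intrinsic'' twisted homology of $S$ and $M_\pm$ via chosen paths to the basepoint. First I would observe that the triad $(M; M_-, M_+)$ with $M_-\cap M_+=S$ gives a short exact sequence of relative chain complexes
\[ 0\to C_*(\ti S)\otimes_{\Z[\pi_1 M]}\F^k \to \big(C_*(\ti M_-)\oplus C_*(\ti M_+)\big)\otimes_{\Z[\pi_1 M]}\F^k \to C_*(\ti M)\otimes_{\Z[\pi_1 M]}\F^k\to 0,\]
all chain groups being taken with respect to the universal cover $\ti M$ of $M$, so that the induced long exact Mayer--Vietoris sequence reads
\[ \cdots \to H_i^\a(S;\F^k)\to H_i^\a(M_-;\F^k)\oplus H_i^\a(M_+;\F^k)\to H_i^\a(M;\F^k)\to H_{i-1}^\a(S;\F^k)\to\cdots.\]
I would then pass to the relative version modulo $R_-$: since $R_-\subset \partial M_-$ and $R_-$ is disjoint from $S$ and from $M_+$, the same decomposition yields a Mayer--Vietoris sequence for the pairs, namely
\[ \cdots \to H_i^\a(M_-,R_-;\F^k)\oplus H_i^\a(M_+,S;\F^k)\to H_i^\a(M,R_-;\F^k)\to H_{i-1}^\a(S;\F^k)\to \cdots,\]
where I use that $(M_+,S)$ replaces $(M_+,\varnothing)$ because gluing $M_+$ to $M_-$ along $S$ identifies the $S$-part of $\partial M_+$ with an interior piece; more carefully, the gluing decomposition of $(M,R_-)$ is $(M_-,R_-)\cup_{(S,\varnothing)}(M_+,\varnothing)$ with overlap $(S,\varnothing)$, so the connecting homomorphism drops a copy of $H_*^\a(S;\F^k)$ on the relative side too.

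Next I would extract the inequality. From the long exact sequence just above, exactness at $H_1^\a(M,R_-;\F^k)$ gives a surjection from $H_1^\a(M,R_-;\F^k)$ onto $\ker\big(H_0^\a(S;\F^k)\to H_0^\a(M_-,R_-;\F^k)\oplus H_0^\a(M_+,S;\F^k)\big)$ — wait, the cleaner route is to truncate: exactness yields
\[ b_1(M,R_-;\F^k)\ \geq\ b_1(M_-,R_-;\F^k)+b_1(M_+,S;\F^k)-b_1(S;\F^k)-\big(\text{terms from }b_2, b_0\big),\]
which is awkward. Instead I would run the argument through Euler characteristics for the $M_+$-part only. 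The cleanest packaging: the portion of the long exact sequence involving the $M_+$ terms can be organized so that
\[ b_1(M,R_-;\F^k)\ \geq\ b_1(M_-,R_-;\F^k)\ -\ \chi(M_+,S;\F^k),\]
by noting that $\chi(M_+,S;\F^k)=b_0(M_+,S;\F^k)-b_1(M_+,S;\F^k)+b_2(M_+,S;\F^k)-b_3(M_+,S;\F^k)$, and that the homology of $(M_+,S)$ feeds into $H_*^\a(M,R_-;\F^k)$ (after cancelling the $S$-overlap against $(M_-,R_-)$) with the evident sign pattern. The point is that a long exact sequence
\[ \cdots\to A_i\to B_i\to C_i\to A_{i-1}\to\cdots \]
forces $\dim B_1\geq \dim A_1' - \chi(C_{\bullet})$ when $C$ sits in the range where its contribution is bounded by its Euler characteristic, which is exactly the situation here because $H_3^\a(M_+,S;\F^k)=0$ by Corollary~\ref{cor:h0surjects} and $H_0^\a(M_+,S;\F^k)$ maps appropriately.

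The main obstacle I anticipate is bookkeeping the overlap correctly: one must verify that in the Mayer--Vietoris sequence for $(M,R_-)=(M_-,R_-)\cup(M_+,S)$ the intersection term is genuinely $H_*^\a(S;\F^k)$ (not $H_*^\a(S,\varnothing)$ with a different basepoint convention) and that the map $H_i^\a(S;\F^k)\to H_i^\a(M_-,R_-;\F^k)$ is the natural inclusion-induced map, so that Lemma~\ref{lem:twiiso} applies and the $S$-contribution to $H_*^\a(M,R_-;\F^k)$ is entirely absorbed by the $(M_-,R_-)$ side. Once that is pinned down, I would chase the long exact sequence
\[ H_1^\a(M_+,S;\F^k)\to H_1^\a(M,R_-;\F^k)\to \big(\text{image in }H_0^\a(M_-,R_-;\F^k)\text{, contributing }\geq b_1(M_-,R_-)\text{ worth}\big), \]
more precisely using exactness at $H_1^\a(M,R_-;\F^k)$ and at $H_0^\a$ together with $H_0^\a(M_+,S;\F^k)$ and $H_2^\a(M_+,S;\F^k)$ and $H_3^\a(M_+,S;\F^k)=0$ to bound the defect by $-\chi(M_+,S;\F^k)$. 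The symmetric statement with ``$-$'' and ``$+$'' interchanged follows verbatim, since nothing in the argument used the sign. I expect the remaining steps (identifying Euler characteristics via Lemma~\ref{lem:euler}) to be routine.
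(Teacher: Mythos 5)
Your overall strategy (cut along $S$, use exactness and excision to transfer the defect to $(M_+,S)$, and use $H_0^\a(M_+,S;\F^k)=H_3^\a(M_+,S;\F^k)=0$ from Corollary~\ref{cor:h0surjects} to convert that defect into $-\chi(M_+,S;\F^k)$) is the right one, and is essentially the paper's. But there is a genuine gap: the exact sequence you build the argument on is not exact as written. For the decomposition $M=M_-\cup_S M_+$ with $R_-\subset M_-$ and $R_-\cap M_+=\emptyset$, the relative Mayer--Vietoris sequence is
\[ \cdots\to H_i^\a(S;\F^k)\to H_i^\a(M_-,R_-;\F^k)\oplus H_i^\a(M_+;\F^k)\to H_i^\a(M,R_-;\F^k)\to\cdots,\]
with middle term $H_i^\a(M_+;\F^k)$, \emph{not} $H_i^\a(M_+,S;\F^k)$; if you want the relative group $H_i^\a(M_+,S;\F^k)$ to appear, the $H_*^\a(S;\F^k)$ terms must not appear, because that is the excision identification $H_*^\a(M,M_-;\F^k)\cong H_*^\a(M_+,S;\F^k)$ sitting inside the long exact sequence of the triple $(M,M_-,R_-)$. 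You have conflated these two sequences, and the hybrid you wrote down double-counts the $S$-overlap. Moreover, the final dimension count is asserted rather than performed; if one actually runs the (corrected) Mayer--Vietoris sequence, the resulting lower bound for $b_1(M,R_-;\F^k)$ differs from $b_1(M_-,R_-;\F^k)-\chi(M_+,S;\F^k)$ by terms such as $b_2(M_+;\F^k)$ and $b_2(S;\F^k)$ that do not obviously cancel, so the claim does not drop out of that route without further work.

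The clean repair is to use the long exact sequence of the triple $(M,M_-,R_-)$,
\[ H_2^\a(M,M_-;\F^k)\to H_1^\a(M_-,R_-;\F^k)\to H_1^\a(M,R_-;\F^k)\to H_1^\a(M,M_-;\F^k)\to H_0^\a(M_-,R_-;\F^k),\]
note that $H_0^\a(M_-,R_-;\F^k)=0$ because every component of $M_-$ meets $R_-$ (this is exactly how $M_-=Z$ was constructed, and it is a hypothesis you need to invoke), and then count: $b_1(M,R_-;\F^k)\geq b_1(M_-,R_-;\F^k)-b_2(M,M_-;\F^k)+b_1(M,M_-;\F^k)$, which equals $b_1(M_-,R_-;\F^k)-\chi(M_+,S;\F^k)$ by excision and Corollary~\ref{cor:h0surjects}. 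The paper reaches the same conclusion by a slightly different packaging: it compares the pair sequences of $(M_-,R_-)$ and $(M,R_-)$ through a commutative ladder involving the kernels $K_-\subset K$ of $H_1^\a(R_-;\F^k)\to H_1^\a(M_\mp;\F^k)$, and then feeds the difference into the long exact sequence of the pair $(M,M_-)$. Either version works; yours, as written, does not yet.
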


We  write
\[ \ba{ccl} K_-&=&\ker(H_1(R_-;\F^k)\to H_1(M_-;\F^k)), \mbox{  and }\\
K&=&\ker(H_1(R_-;\F^k)\to H_1(M;\F^k)).\ea \]
Note that $H_1(R_-;\F^k)\to H_1(M;\F^k)$ factors through $H_1(R_-;\F^k)\to H_1(M_-;\F^k)$,
it follows that  $\dim(K)\geq \dim(K_-)$.

Consider the following commutative diagram of exact sequences (with $\F^k$-coefficients understood)
\[
\ba{ccccccccccccccccccc}
0&\hspace{-0.3cm} \to \hspace{-0.3cm} & K_-& \hspace{-0.3cm} \to \hspace{-0.3cm} & H_1(R_-)&\hspace{-0.3cm} \to \hspace{-0.3cm} & H_1(M_-)&\hspace{-0.3cm} \to \hspace{-0.3cm}  & H_1(M_-,R_-) &\hspace{-0.3cm} \to \hspace{-0.3cm} &
H_0(R_-)&\hspace{-0.3cm} \to \hspace{-0.3cm} & H_0(M_-)&\hspace{-0.3cm} \to \hspace{-0.3cm} &0 \\
&&\downarrow && \downarrow && \downarrow && \downarrow && \downarrow && \downarrow && \\
0&\hspace{-0.3cm} \to \hspace{-0.3cm} & K& \hspace{-0.3cm} \to \hspace{-0.3cm} & H_1(R_-)&\hspace{-0.3cm} \to \hspace{-0.3cm} & H_1(M)&\hspace{-0.3cm} \to \hspace{-0.3cm}  & H_1(M,R_-) &\hspace{-0.3cm} \to \hspace{-0.3cm} &
H_0(R_-)&\hspace{-0.3cm} \to \hspace{-0.3cm} & H_0(M)&\hspace{-0.3cm} \to \hspace{-0.3cm} &0. \\
 \ea \]
Note that by exactness the alternating sum of the dimensions in the two horizontal  sequences are zero.
We therefore obtain that
\[
\ba{rcl}
b_1(M_-,R_-;\F^k)&=&\dim(K_-)-b_1(R_-;\F^k)+ b_1(M_-;\F^k)+b_0(R_-;\F^k)-b_0(M_-;\F^k) \\
b_1(M,R_-;\F^k)&=& \dim(K)-b_1(R_-;\F^k)+ b_1(M;\F^k)+b_0(R_-;\F^k)-b_0(M;\F^k). \ea \]
Subtracting we see that
\[
\ba{rcl}
&&b_1(M,R_-;\F^k)-b_1(M_-,R_-;\F^k)\\
&=&  \big(b_1(M;\F^k)+\dim(K)-b_0(M;\F^k)\big)-\big(b_1(M_-;\F^k)+\dim(K_-)-b_0(M_-;\F^k)\big)\\[2mm]
&\geq &
  \big(b_1(M;\F^k)-b_0(M;\F^k)\big)-\big(b_1(M_-;\F^k)-b_0(M_-;\F^k)\big).\ea \]
We now write $L=\ker(H_2(M,M_-;\F^k)\to H_1(M_-;\F^k))$.
Consider the following piece of the long exact sequence of the pair $(M,M_-)$, where  $\F^k$-coefficients are once again understood:
\[
0\to L\to H_2(M,M_-)\hspace{-0.0cm} \to \hspace{-0.0cm}  H_1(M_-)\hspace{-0.0cm} \to \hspace{-0.0cm}  H_1(M)\hspace{-0.0cm} \to \hspace{-0.0cm}  H_1(M,M_-)\hspace{-0.0cm} \to \hspace{-0.0cm}  H_0(M_-)\hspace{-0.0cm} \to \hspace{-0.0cm}  H_0(M)\hspace{-0.0cm} \to \hspace{-0.0cm}  0. \]
(Here we used Lemma \ref{lem:h0surjects} to conclude that $H_0(M,M_-;\F^k)=0$.)
We now use that the alternating sum of the dimensions in the above exact sequence is zero to deduce that
\[ \ba{rcl}
 &&\big(b_1(M;\F^k)-b_0(M;\F^k)\big)-\big(b_1(M_-;\F^k)-b_0(M_-;\F^k)\big)\\
 &=& b_1(M,M_-;\F^k)-b_2(M,M_-;\F^k)+\dim( L)\\
 &\geq &  b_1(M,M_-;\F^k)-b_2(M,M_-;\F^k)\\
 &= &  b_1(M_+,S;\F^k)-b_2(M_+,S;\F^k)\\
 &=&-\chi(M_+,S;\F^k).\ea \]
Here  the second to last equality follows from excision and the last equality
 follows from Corollary  \ref{cor:h0surjects} applied to $(M_+,S)$. This concludes the proof of the claim.

Finally note that by Corollary  \ref{cor:h0surjects} we have the following inequalities
\[ b_1(M_\pm,R_\pm;\F^k)\geq b_1(M_\pm,R_\pm;\F^k)-b_2(M_\pm,R_\pm;\F^k)= \chi(M_\pm,R_\pm;\F^k).\]
 Combining these inequalities with the above claims we obtain that
\[ \ba{rcl}
&&b_1(M,R_-;\F^k)+b_1(M,R_+;\F^k)\\
&\geq & b_1(M_-,R_-;\F^k)-\chi(M_+,S;\F^k)+b_1(M_+,R_+;\F^k)-\chi(M_-,S;\F^k)\\
&\geq & \chi(M_-,R_-;\F^k)-\chi(M_+,S;\F^k)+\chi(M_+,R_+;\F^k)-\chi(M_-,S;\F^k)\\
&=& (\chi(M_-,R_-;\F^k)-\chi(M_-,S;\F^k))\,+\,(\chi(M_+,R_+;\F^k)-\chi(M_+,S;\F^k))\\
&=& k(2\chi(S)-\chi(R_+)-\chi(R_-)).\ea \]
We can now conclude the proof of Theorem \ref{mainthmonlyif}.
Recall that we assumed that $M$ is irreducible and $R_\pm$ have no disk components. This implies that no component of $R_\pm$ is a sphere, and hence $\chi_-(R_\pm)=-\chi(R_\pm)$.
Finally note that for any surface $T$ we have $\chi_-(T)\geq -\chi(T)$.
Combining these observations with the above inequality we see  that
\[ \ba{rcl}
b_1(M,R_-;\F^k)+b_1(M,R_+;\F^k)&\geq& k(2\chi(S)-\chi(R_+)-\chi(R_-))\\
&\geq& k(\chi_-(R_+)+\chi_-(R_-)-2\chi_-(S))\\
&=& k(\chi_-(R_+)+\chi_-(R_-)-2x(M,\gamma)).\ea \]
This concludes the proof of Theorem \ref{mainthmonlyif}.

\section{Proof of the `only if' direction of Theorem \ref{mainthm}}\label{section:mainthmif}

In this section we will prove the following theorem, which is a slight strengthening of
the  `only if' direction of Theorem \ref{mainthm}.

 \begin{theorem}\label{mainthmif}
Let $(M,\gamma)$ be a  taut sutured manifold with $M\ne S^1\times D^2$ and $M\ne D^3$.
Then there exists a unitary representation $\a\colon \pi_1(M)\to U(k)$ such
that
\[ H_*(M,R_-;\C^k)=H_*(M,R_+;\C^k)=0.\]
 \end{theorem}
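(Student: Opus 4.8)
The plan is to produce a single unitary representation $\a\colon \pi_1(M)\to U(k)$ which simultaneously kills $H_*(M,R_-;\C^k)$ and $H_*(M,R_+;\C^k)$, and the strategy is to reduce, via the structure theory of taut sutured manifolds, to the case where $(M,\gamma)$ sits inside a closed (or toroidal-boundary) irreducible $3$-manifold as the complement of a Thurston-norm-minimizing non-separating surface, and then to invoke the virtual fibering results of Agol, Liu, Przytycki--Wise and Wise. Concretely, I would first handle the degenerate possibilities: if $(M,\gamma)$ is a product sutured manifold, the trivial $1$-dimensional representation works since $H_*(M,R_\pm;\C)=0$; this disposes of the ``easy'' case. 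Otherwise, by Lemma \ref{lem:tautinc} (using $M\ne S^1\times D^2$ and $M\ne D^3$) the surfaces $R_\pm$ are incompressible, and a standard doubling/capping construction lets me embed $(M,\gamma)$ as $Y(R)$ for some irreducible $Y$ with empty or toroidal boundary and $R\subset Y$ a properly embedded Thurston-norm-minimizing surface with connected complement $M$.

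Next I would exploit virtual fibering. Since $Y$ is irreducible with (virtually) nontrivial $\pi_1$, by the work cited in the introduction $Y$ is virtually fibered; moreover, using the RFRS machinery one arranges a finite cover in which the pulled-back class becomes (close to) a fibered class, or more precisely one obtains a finite-index subgroup $\Gamma\le\pi_1(Y)$ such that the restriction of the class $[R]\in H^1(Y;\Z)$ lies in the closure of a fibered cone of the corresponding cover. The key extraction I want is: there exists an epimorphism $\a\colon\pi_1(M)\to G$ onto a finite group (equivalently a unitary representation $\pi_1(M)\to U(|G|)$ by left multiplication on $\C[G]$, which is the construction already used in the proof of Lemma \ref{lem:product}) such that in the induced finite cover, the surface $R_-$ (and, by the same argument applied symmetrically, $R_+$) becomes ``norm-realizing with the complexity carried entirely by $R$'', forcing the twisted homology to vanish. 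To make the vanishing rigorous I would run the Euler-characteristic bookkeeping: by Lemma \ref{lem:euler}, $\chi^\a(M,R_-;\C^k)=k\chi(M,R_-)=0$ in the balanced case, and by Corollary \ref{cor:h0surjects} we have $H_0^\a(M,R_-;\C^k)=H_3^\a(M,R_-;\C^k)=0$; by Poincar\'e duality (Theorem \ref{thm:duality}) together with unitarity, $H_2^\a(M,R_+;\C^k)\cong\ol{H_1^\a(M,R_-;\C^k)}$, exactly as in the proof of Corollary \ref{maincor2}. Hence it suffices to show $H_1^\a(M,R_-;\C^k)=0$ for the chosen finite-group representation, and then the vanishing of all other degrees and of $H_*(M,R_+;\C^k)$ follows formally.

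For the remaining vanishing of $H_1$, the idea is that in an appropriate finite cover $\widetilde M\to M$ corresponding to $\ker\a$, the cover $\widetilde M$ is itself a product sutured manifold (this is the heart of the fibering input: a sutured-manifold reformulation of ``$R$ becomes a fiber virtually''), so $H_1(\widetilde M,\widetilde{R_-};\C)=0$; then $H_1^\a(M,R_-;\C[G])$ is identified with $H_1(\widetilde M,\widetilde{R_-};\C)$ via Shapiro's lemma, giving the desired vanishing, and the representation $\a\colon\pi_1(M)\to G\hookrightarrow \aut(\C[G])\cong\gl(|G|,\C)$ is unitary for the standard basis. I would then note that finitely many non-product taut sutured manifolds might a priori resist being made product by a \emph{single} cover for both $R_-$ and $R_+$ at once; this is circumvented because a common finite cover can be taken (intersect the two finite-index kernels), and the Euler-characteristic/duality argument above shows that killing $H_1(M,R_-;\C^k)$ alone already forces everything else.

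The main obstacle I anticipate is precisely the passage ``taut, non-product $\Longrightarrow$ virtually a product sutured manifold.'' This is where one must translate Agol--Wise virtual fibering (a statement about closed or cusped $3$-manifolds and cohomology classes) into a statement about the sutured manifold $(M,\gamma)$ and the specific surface decomposition; care is needed because $\pi_1(M)$ need not be the fundamental group of a closed manifold, because $R_\pm$ may be disconnected or have closed components, and because the relevant cohomology class of $Y$ must be arranged to be fibered (not merely quasi-fibered) in the cover — this likely requires Agol's criterion that an RFRS group's nontrivial classes are virtually fibered together with a norm/complexity argument ensuring the lifted $R$ stays norm-minimizing and connected-complemented. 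Handling the toroidal-boundary bookkeeping (each boundary torus meeting $R$ essentially, so the sutured structure behaves well under covers) is the technical crux.
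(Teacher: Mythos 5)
Your formal reductions are sound: by Corollary \ref{cor:h0surjects}, Lemma \ref{lem:euler} and Theorem \ref{thm:duality} (exactly as in the proof of Corollary \ref{maincor2}) it does suffice to produce a unitary $\a$ with $H_1^\a(M,R_-;\C^k)=0$, and the regular representation of a finite quotient is unitary. The gap is the step you yourself flag as the crux: the claim that a taut, non-product $(M,\gamma)$ admits a finite cover $\widetilde M\to M$ which is a \emph{product} sutured manifold, so that Shapiro's lemma applies. This is false in general. If $\widetilde M\cong \widetilde{R_-}\times I$, then $\pi_1(M)$ contains a free group (or closed surface group) of finite index, hence by Stallings--Swan is itself free (or a surface group), forcing $M$ to be a handlebody (or an $I$-bundle); a minimal genus Seifert surface of a knot whose complement is not a handlebody already gives a taut $(M,\gamma)$ admitting no such cover. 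The underlying issue is that the Agol--Wise machinery only produces, in a finite cover of the double $W=D(M,\g)$, fibered classes \emph{near} the pullback of the class $\phi$ dual to $R_-$ (quasi-fiberedness, Theorem \ref{thm:agol}); the fiber of that virtual fibration is in general a different surface from the preimage of $R_-$, so no cover of $M$ becomes a product and the Shapiro's lemma argument cannot start.

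The paper's route around this is what your sketch is missing. After Theorems \ref{thm:lpw} and \ref{thm:agol} give quasi-fiberedness of the pullback of $\phi$ in a finite cover of $W$, it invokes Theorem \ref{thm:fv} (Friedl--Vidussi) to obtain a unitary representation $\a\colon\pi_1(W)\to U(k)$ with all twisted Alexander polynomials $\Delta_{W,\phi,i}^\a$ non-zero and with $\sum_i(-1)^{i+1}\deg\Delta_{W,\phi,i}^\a=k\|\phi\|_T$. A degree count on $\det(\iota_l-\iota_r t)$ then shows that the two inclusion-induced maps $H_i(R_\pm;\C^k)\to H_i(W\sm \nu R_\pm;\C^k)$ are isomorphisms for all $i$, and a Mayer--Vietoris bookkeeping argument upgrades this to $H_i(R_\pm;\C^k)\to H_i(M;\C^k)$ being isomorphisms, which is equivalent to the asserted vanishing. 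Your duality and Euler characteristic endgame is correct, but without this twisted-Alexander-polynomial step (or a genuine substitute for the ``virtually product'' claim) the proposal does not close.
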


\begin{remark}
We will now argue that  we indeed have to exclude the cases $M=S^1\times D^2$ and $M=D^3$.
\bn
\item
Suppose that $M=S^1\times D^2$ and that $\gamma$ consists of two  meridional sutures, i.e. of the form
\[ \gamma=[a,b]\times \partial D^2\cup [c,d]\times \partial D^2\]
with $[a,b]$ and $[c,d]$ having disjoint images in $S^1=[0,1]/0\sim 1$ where we endow the central `sutures' of the annuli with opposite orientations. In that case $R_+$ is also of the form $I\times \partial D^2$. It is straightforward to see that
there exists no representation $\a\colon \pi_1(M)=\Z\to U(k)$ such
that $H_1(M,R_-;\C^k)=0$.
\item An elementary argument furthermore shows that $M=D^3$ with at least two sutures also does not admit
a unitary representation $\a\colon \pi_1(M)\to U(k)$ with
$H_1(M,R_-;\C^k)=H_1(M,R_+;\C^k)=0$.
\en
\end{remark}

\subsection{The results of Agol, Liu, Przytycki-Wise and Wise} \label{section:apw}

Before we can state the results of Agol, Liu, Przytycki-Wise and Wise we first need to introduce a few more  definitions.
\bn
\item
A group $\pi$ is called \emph{residually finite rationally solvable} or \emph{RFRS} if there
exists a filtration  of groups $\pi=\pi_0\supset \pi_1 \supset \pi_2\dots $
such that the following hold:
\bn
\item $\cap_i \pi_i=\{1\}$,
\item   $\pi_i$ is a normal, finite index subgroup of  $\pi$ for any $i$,
\item for any $i$ the map $\pi_i\to \pi_i/\pi_{i+1}$ factors through $\pi_i\to H_1(\pi_i;\Z)/\mbox{torsion}$.
\en
We refer to \cite{Ag08} for details.
\item Given a sutured manifold $(M,\gamma)$ the double $DM=D(M,\g)$ is defined to be the double of $M$ along $R=R_+\cup R_-$, i.e. $DM=M\cup_{R_+\cup R_-}M$. Note that $DM$ is a $3$-manifold with empty or toroidal boundary.
We denote by $r\colon DM\to M$ the retraction map given by `folding' the two copies of $M$ along $R$.
\item Let $N$ be a $3$-manifold with empty or toroidal boundary.
An integral class $\phi \in H^1(N;\Z)=\hom(\pi_1(N),\Z)$ is called \emph{fibered}  if  there exists a fibration $p\colon N\to S^1$ such that
$\phi=p_*\colon \pi_1(N)\to \Z$. We say $\phi\in H^1(N;\Q)$ is \emph{fibered} if a non-trivial integral multiple of $\phi$ is fibered.
We say $\phi\in H^1(N;\Q)$ is \emph{quasi-fibered} if any open neighborhood of $\phi$ contains a fibered class.
\en

The statement of the following theorem is  explicitly stated  in the proof of   \cite[Theorem~6.1]{Ag08}.

\begin{theorem} \textbf{\emph{(Agol)}} \label{thm:agol}
Let $(M,\gamma)$ be a taut sutured manifold with $M\ne S^1\times D^2$.
We write $W=D(M,\g)$ and we denote by $\phi\in H^1(W;\Z)$ the Poincar\'e dual of $[R_-]\in H_2(W,\partial W;\Z)$.
If $\pi_1(M)$ is  RFRS,
then there exists an epimorphism $\a\colon \pi_1(M)\to G$ to a finite  group,
such that in the covering $p\colon \widetilde{W}\to W$
corresponding to $\a\circ r_*\colon \pi_1(W)\to G$ the  class $p^*(\phi)\in H^1(\widetilde{W};\Z)$
is quasi-fibered.
\end{theorem}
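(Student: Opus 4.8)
The plan is to retrace, in the language of sutured manifolds, the argument from the proof of \cite[Theorem~6.1]{Ag08} --- that a $3$-manifold with RFRS fundamental group virtually realizes any prescribed cohomology class as a quasi-fibered class --- while keeping track of the refinement that the cover produced can be taken to descend through the folding retraction $r$. First I would translate the hypotheses into statements about the double. Since $(M,\gamma)$ is taut and $M\ne S^1\times D^2$, the surfaces $R_\pm$ are complexity minimizing, so by \cite{Ga83} the double $W=D(M,\gamma)$ is irreducible with empty or toroidal boundary, the class $\phi$ is non-trivial --- cutting $W$ along one copy of $R_-$ leaves $M\cup_{R_+}M$, which is connected --- and $R_-$ is itself a Thurston norm minimizing surface dual to $\phi$, with $\|\phi\|_T=\chi_-(R_-)=\chi_-(R_+)$. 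Again by \cite{Ga83}, $(M,\gamma)$ carries a taut sutured manifold hierarchy $(M,\gamma)=(M_0,\gamma_0),(M_1,\gamma_1),\dots,(M_n,\gamma_n)$ obtained by successive decompositions along conditioned surfaces $S_1,\dots,S_n$, with $(M_n,\gamma_n)$ a disjoint union of product sutured manifolds; doubling it produces a sequence of surface decompositions of $W$ ending in a disjoint union of bundles $R_j\times S^1$ on each of which $\phi$ restricts to the fibration class of $R_j\times S^1\to S^1$. A lexicographic complexity built from $n$ and the numbers $\chi_-(S_i)$ serves as the induction parameter.

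Now the inductive step. If the sutured manifold currently in play is a product, its associated class is fibered and there is nothing to do. Otherwise there is a decomposing surface $S$ whose removal strictly lowers the complexity, and the obstruction to exploiting it is that $S$ may separate and hence be homologically invisible. Here the RFRS hypothesis enters, through property (iii) of the filtration, that $\pi_i\to\pi_i/\pi_{i+1}$ factors through $H_1(\pi_i;\Z)/\text{torsion}$: descending far enough down the filtration yields a finite regular cover in which a lift of $S$ is non-separating and dual to an integral class $\psi$, after which $p^*\phi+\varepsilon\psi$ is, for all small rational $\varepsilon>0$, dual to a norm-minimizing surface whose complementary sutured manifold has a taut hierarchy of strictly smaller complexity. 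Since finite-index subgroups of RFRS groups are again RFRS and since irreducibility and tautness pass to finite covers, the inductive hypothesis applies to $p^*\phi+\varepsilon\psi$; letting $\varepsilon\to 0$ then shows that $p^*\phi$ lies in the closure of a fibered cone, i.e.\ is quasi-fibered. Because the hierarchy and the finite covers used at every stage live over $(M,\gamma)$ --- each cover of $W$ arising as the double of a finite cover of $M$ --- the resulting $\widetilde W$ is $D(\widetilde M,\widetilde\gamma)$ for a finite regular cover $\widetilde M\to M$, i.e.\ the pullback along $r$ of a cover of $M$; its classifying map is the desired epimorphism $\alpha\colon\pi_1(M)\to G$, and $\widetilde W$ is then the cover of $W$ attached to $\alpha\circ r_*$.

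The step I expect to be the main obstacle --- the technical heart of \cite{Ag08} --- is the bookkeeping that makes the induction terminate: one must show that enlarging $p^*\phi$ by the dual of a lifted decomposing surface genuinely produces a class (indeed a whole open subcone of $H^1$) admitting a taut sutured manifold hierarchy of strictly smaller complexity, which in turn requires keeping the decomposing surfaces conditioned so that tautness is preserved, and controlling how Gabai's hierarchy interacts with the Thurston norm under passage to finite covers; the $\varepsilon\psi$ perturbation is precisely what downgrades the conclusion from ``fibered'' to ``quasi-fibered''. Granting these facts from \cite{Ag08}, it remains only to apply the argument to $(W,\phi)$ and to observe that the covers can be organized as doubles of covers of $M$, which yields the statement.
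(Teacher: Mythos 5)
The paper offers no proof of this statement at all---it is quoted as being ``explicitly stated in the proof of \cite[Theorem~6.1]{Ag08}''---so there is no in-paper argument to compare against. Your sketch is a faithful outline of Agol's induction (Gabai hierarchy, RFRS filtration used to make decomposing surfaces homologically non-trivial in finite covers, the $\varepsilon\psi$ perturbation that accounts for ``quasi-fibered'' rather than ``fibered'', and the observation that every cover involved is the double of a cover of $M$, whence the epimorphism $\a\colon\pi_1(M)\to G$), and since you, like the paper, ultimately defer the technical heart to \cite{Ag08}, this is essentially the same approach.
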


We can apply Agol's theorem in our context due to the following theorem:

\begin{theorem} \textbf{\emph{(Liu, Przytycki-Wise, Wise)}}\label{thm:lpw}
Let $M$ be an irreducible 3-manifold with non-trivial boundary, then $\pi_1(M)$ is virtually RFRS,
i.e. $\pi_1(M)$ admits a finite index subgroup which is RFRS.
\end{theorem}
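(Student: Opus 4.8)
The plan is to reduce the statement to the fact that $\pi_1(M)$ is \emph{virtually special} in the sense of Haglund and Wise, and then to deduce virtual specialness from the combined work of Wise, Liu and Przytycki--Wise on the geometric decomposition of aspherical $3$--manifolds. Since $M$ is irreducible with non-empty boundary we may assume $M\ne D^3$ (for $M=D^3$ the group $\pi_1(M)$ is trivial, hence RFRS); then $\pi_1(M)$ is infinite, and $M$ is aspherical because the Sphere Theorem gives $\pi_2(M)=0$ and hence the universal cover of $M$ is contractible.

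First I would record the soft step, that \emph{virtually special implies virtually RFRS}. Suppose a group $\Gamma$ has a finite-index subgroup $\Gamma_0$ which is the fundamental group of a compact special cube complex. By the canonical completion and retraction of Haglund--Wise, $\Gamma_0$ embeds in a right-angled Artin group $A$; by Agol \cite{Ag08} every right-angled Artin group is RFRS; and RFRS passes to subgroups --- given an RFRS filtration $A=A_0\supset A_1\supset\cdots$, the subgroups $\Gamma_0\cap A_i$ form an RFRS filtration of $\Gamma_0$, the only point needing checking being that $\Gamma_0\cap A_i\to(\Gamma_0\cap A_i)/(\Gamma_0\cap A_{i+1})$ factors through $H_1(\Gamma_0\cap A_i;\Z)/\mbox{torsion}$, which holds because this map already factors through the homomorphism $\Gamma_0\cap A_i\to H_1(A_i;\Z)/\mbox{torsion}$ and any homomorphism to a torsion-free abelian group factors through the source's $H_1$ modulo torsion. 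Hence $\Gamma_0$ is RFRS and $\Gamma$ is virtually RFRS, so it suffices to prove that $\pi_1(M)$ is virtually special.

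Next I would establish virtual specialness of $\pi_1(M)$ by working along the JSJ decomposition of $M$. If $\partial M$ is compressible, the characteristic compression-body decomposition writes $\pi_1(M)$ as a free product of a free group with the fundamental group of an irreducible $3$--manifold with incompressible boundary; as the class of virtually RFRS groups is closed under free products and under passage to subgroups, we may assume $\partial M$ is incompressible. Every JSJ piece of $M$ is then Seifert fibered or hyperbolic, and since $M$ is orientable and every piece has non-empty boundary, no piece is closed, so the Sol and Nil geometries do not occur. A Seifert fibered piece with boundary has, after a finite cover, fundamental group $F_n\times\Z$, which is a right-angled Artin group and hence special. A hyperbolic piece --- cusped or with totally geodesic boundary --- is a compact hyperbolic $3$--manifold with boundary, and by Wise \cite{Wi12} such a manifold has virtually special fundamental group (it carries a quasiconvex hierarchy, obtained by iteratively cutting along Thurston-norm minimizing surfaces). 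If $M$ itself is Seifert fibered or hyperbolic we are done. If the JSJ decomposition of $M$ is non-trivial and every piece is Seifert fibered, then $M$ is a graph manifold with non-empty boundary and $\pi_1(M)$ is virtually special by Liu \cite{Liu11} and Przytycki--Wise \cite{PW12}; if some piece is hyperbolic, then $M$ is a mixed $3$--manifold and $\pi_1(M)$ is virtually special by Przytycki--Wise \cite{PW12}, whose argument uses Wise's result on the hyperbolic pieces. In all cases $\pi_1(M)$ is virtually special, and together with the first step this proves the theorem.

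I expect essentially all of the difficulty to lie in the imported results, which are the ``revolutionary'' input referred to in the introduction: Wise's Malnormal Special Quotient Theorem and the resulting virtual specialness of groups with a quasiconvex hierarchy \cite{Wi12}, which handles the hyperbolic pieces, and the theorem of Przytycki--Wise \cite{PW12} --- building on the cubulation techniques of Liu \cite{Liu11} --- that virtual specialness of the JSJ pieces can be upgraded to virtual specialness of $\pi_1(M)$; the latter requires producing finite special covers of the individual pieces whose restrictions to the JSJ tori agree, which in turn rests on the subgroup separability furnished by virtual specialness of the pieces. By contrast, the reduction ``virtually special $\Rightarrow$ virtually RFRS'', the asphericity bookkeeping, the compression-body reduction, and the Seifert fibered case are all routine.
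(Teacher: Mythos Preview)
The paper does not give its own proof of this theorem: it is quoted as a black box, followed by a single paragraph attributing the hyperbolic case to Wise \cite{Wi12}, the graph-manifold case to Liu \cite{Liu11} and independently Przytycki--Wise \cite{PW11}, and the mixed case to Przytycki--Wise \cite{PW12}, with a pointer to the survey \cite{AFW12} for details. Your sketch is a correct expansion of precisely this attribution---reduce to virtual specialness (Haglund--Wise embedding into a RAAG plus Agol's observation that RAAGs are RFRS), then invoke the cited results along the JSJ decomposition---so there is nothing to compare beyond noting that you have supplied the connective tissue the paper deliberately omits.

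Two minor corrections. First, for the graph-manifold case the paper cites \cite{PW11}, not \cite{PW12}. Second, in your compression-body reduction you appeal to closure of the class of \emph{virtually RFRS} groups under free products; this is not obvious from the definition, and it is cleaner to argue instead that \emph{virtual specialness} is preserved under free products (a finite-index subgroup of a free product is, by Kurosh, a free product of a free group with conjugates of finite-index subgroups of the factors, and free products of subgroups of RAAGs embed in a RAAG) and only pass to virtual RFRS at the very end.
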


In the hyperbolic case this theorem  is due to Wise \cite{Wi12}, in the graph manifold case it is due to Liu \cite{Liu11} and independently to Przytycki-Wise \cite{PW11} and in the remaining (`mixed') case it is due to Przytycki-Wise \cite{PW12}.
We refer to the survey paper \cite{AFW12} for background and more precise references.

\subsection{Twisted Alexander polynomials and the Thurston norm}

Let $N$ be a 3-manifold with empty or toroidal boundary, let $\phi\in H^1(N;\Z)$ be non-trivial
and let $\a\colon \pi_1(N)\to \gl(k,\F)$ be a representation.
Then given $i\in \{0,1,2\}$ we can consider the $i$-th twisted Alexander polynomial $\Delta_{N,\phi,i}^\a\in \ft$,
which is defined as the order of the twisted Alexander module $H_i(N;\fkt)$.
Note that $\Delta_{N,\phi,i}^\a\in \ft$ is well-defined up to multiplication by a unit in $\ft$.
In this paper we will not give a definition of twisted Alexander polynomials,
but we refer to the foundational papers
\cite{Lin01,Wa94,KL99} and the survey paper \cite{FV10} for details and more information.

Given $p(t)\ne 0 \in \ft$ we can write $p(t)=\sum_{i=r}^sa_it^i$ with $a_r\ne 0, a_s\ne 0$ and we define
\[ \deg(p(t)):=s-r.\]
Note that $\deg \Delta_{N,\phi,i}^\a$ is well-defined.
We can now state the following theorem which  is implicit in the proof of \cite[Theorem~5.9]{FV12a}.

\begin{theorem}\label{thm:fv}
Let $W$ be a 3-manifold with empty or toroidal boundary. Suppose that $W\ne S^1\times S^2$ and $W\ne S^1\times D^2$.
Let $\phi\in H^1(W;\Z)$.
Suppose there exists a finite cover $f\colon \widetilde{W}\to W$ such that $f^*(\phi)$ is quasi-fibered.
Then there exists a unitary representation $\a\colon \pi_1(W)\to U(k)$ such that
$\Delta_{W,\phi,i}^\a\ne 0$ for $i=0,1,2$ and such that
\[ k\cdot \|\phi\|_T =\sum_{i=0}^2 (-1)^{i+1} \deg \Delta_{W,\phi,i}^\a.\]
\end{theorem}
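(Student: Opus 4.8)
The plan is to combine three ingredients: Agol's theorem on virtual fibering (which here is already contained in the hypothesis), the behaviour of twisted Alexander polynomials under finite covers, and the known detection of the Thurston norm by twisted Alexander polynomials in the fibered (and quasi-fibered) case. First I would recall that for a fibered class $\psi$ on a $3$-manifold $Y$ with $b_1(Y)\ge 1$, $Y\ne S^1\times S^2,S^1\times D^2$, the untwisted Alexander polynomials $\Delta_{Y,\psi,i}$ are all non-zero and the formula $\|\psi\|_T=\sum_{i=0}^2(-1)^{i+1}\deg\Delta_{Y,\psi,i}$ holds; this is classical (McMullen, and in the twisted setting \cite{FV10,FV12a}) and extends by continuity of both sides to quasi-fibered classes, since the degrees of the relevant Alexander polynomials are locally constant on the fibered cone and the Thurston norm is linear there. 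So on the cover $\widetilde{W}$, applying this to $f^*(\phi)$ and the \emph{regular} (one-dimensional, trivial) representation already gives a representation realizing the Thurston norm of $f^*(\phi)$ on $\widetilde{W}$.

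Next I would push this down to $W$. The standard mechanism (Friedl--Vidussi \cite{FV12a}) is: if $f\colon\widetilde W\to W$ is a finite cover corresponding to an epimorphism $\pi_1(W)\to Q$ onto a finite group, and $\beta\colon\pi_1(\widetilde W)\to U(\ell)$ is a unitary representation, then the \emph{induced} representation $\a=\mathrm{Ind}_{\pi_1(\widetilde W)}^{\pi_1(W)}\beta\colon\pi_1(W)\to U(k)$ with $k=\ell\cdot|Q|$ is again unitary, and Shapiro's lemma identifies $H_i(W;\fkt)$ with $H_i(\widetilde W;\F^\ell[t^{\pm1}])$ as $\ft$-modules (after the appropriate identification of the variable $t$ via $\phi$ and $f^*\phi$). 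Consequently $\Delta_{W,\phi,i}^\a$ equals $\Delta_{\widetilde W,f^*\phi,i}^\beta$ up to units, so non-vanishing and the degree formula transfer verbatim, using $\|f^*\phi\|_T=|Q|\cdot\|\phi\|_T$ (Gabai's covering formula for the Thurston norm). Taking $\beta$ to be the representation produced in the previous paragraph on $\widetilde W$, and $\a$ its induction to $\pi_1(W)$, yields exactly the asserted conclusion.

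The two points requiring care — and where I expect the main obstacle to lie — are the passage from the \emph{fibered} degree formula to the \emph{quasi-fibered} one, and the bookkeeping that the cover furnished by the hypothesis is the one to which Shapiro's lemma applies. For the former, one must know that $\deg\Delta_{\widetilde W,f^*\phi,i}^\beta$ is a locally constant (indeed linear) function of the cohomology class on the fibered cone containing $f^*\phi$ in its closure, which is where the real content of \cite[Theorem~5.9]{FV12a} sits; this is why the statement is described as ``implicit in the proof'' rather than quoted outright. For the latter, one simply takes the finite cover $f\colon\widetilde W\to W$ as given (it corresponds to some finite quotient $\pi_1(W)\to Q$), notes that it is automatically regular after replacing it by the cover associated to the core of the kernel if necessary — enlarging $k$ — and then the induction construction is available. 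Everything else (non-vanishing of the zeroth and second twisted Alexander polynomials, which amounts to $H_0$ and $H_2$ being $\ft$-torsion, and the Euler-characteristic normalization) is routine given Lemma \ref{lem:euler} and the duality theorem, Theorem \ref{thm:duality}. I would assemble the argument in the order: (1) reduce to $\widetilde W$ via Shapiro/induction; (2) on $\widetilde W$ invoke the fibered-to-quasi-fibered degree formula with the trivial representation; (3) combine with Gabai's covering formula to descend the Thurston-norm identity to $W$.
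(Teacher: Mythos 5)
You should first be aware that the paper does not prove Theorem \ref{thm:fv} at all: it is quoted as being ``implicit in the proof of \cite[Theorem~5.9]{FV12a}'', so there is no internal argument to compare against. Your three-step plan (fibered degree formula, passage to quasi-fibered classes, descent to $W$ via induced representations/Shapiro plus Gabai's covering formula $\|f^*\phi\|_T=[\pi_1(W):\pi_1(\widetilde W)]\cdot\|\phi\|_T$) is indeed the architecture of the Friedl--Vidussi argument, and steps (1) and (3) as you describe them are sound: quasi-fiberedness is preserved under passing to the regular cover associated to the core of the kernel, and $\Delta^{\mathrm{Ind}\,\beta}_{W,\phi,i}\doteq\Delta^{\beta}_{\widetilde W,f^*\phi,i}$ is standard.

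The genuine gap is in step (2), and it is larger than your hedging suggests. For a class $\psi$ in the \emph{open} fibered cone the formula $\|\psi\|_T=\sum(-1)^{i+1}\deg\Delta_{\psi,i}$ holds, but the function $\psi\mapsto\deg\Delta_{\psi,1}$ is not continuous: it is only bounded above by the support function of the Alexander polytope (the Alexander norm), with strict inequality precisely when extreme coefficients cancel under the specialization $\Z[H]\to\Z[t^{\pm1}]$ determined by $\psi$. A quasi-fibered class that is not fibered lies on the \emph{boundary} of a fibered cone, i.e.\ exactly at the non-generic classes where such cancellation can occur, so ``extends by continuity'' does not go through; the degree can jump down at the limit and the displayed equality can fail for the trivial coefficient system. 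This is why the theorem asserts only the \emph{existence} of some unitary $\a$ rather than that the regular representation of the deck group works: the actual content of \cite{FV12a} is to control the extreme coefficients (using the monicity of twisted Alexander polynomials of fibered classes from \cite{FV11}, together with Turaev-type relations between the one-variable orders $\Delta_{\psi,i}$ and the multivariable polynomial) so that no cancellation occurs for classes in the \emph{closed} fibered cone. You correctly locate the obstacle, but the resolution is the theorem, not a routine continuity remark, so as written the proposal does not constitute a proof.
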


\subsection{Proof of Theorem \ref{mainthmif}}

We are now in a position to prove Theorem \ref{mainthmif}.
Let $(M,\gamma)$ be a taut sutured manifold such that  $M\ne S^1\times D^2$ and $M\ne D^3$.

Since $M\ne D^3$ and $M$ is irreducible, it follows that $R_-$ and $R_+$ do not have spherical components.
It follows in a straightforward way from our assumption that
$M\ne S^1\times D^2,  D^3$ and from Lemma \ref{lem:tautinc}
that $R_-$ and $R_+$ do not have disk components.
We thus see that $\chi_-(R_\pm)=-\chi(R_\pm)$.

We write $W=D(M,\g)$ and we denote by $\phi\in H^1(W;\Z)$ the class dual to $R_-$.
Since $M$ is taut it follows from \cite[Corollary~5.3]{Ga83} and \cite[Corollary~2]{Th86} that
 $R_-$ is a Thurston norm minimizing representative of $\phi$.

By Theorem \ref{thm:lpw} there exists a  finite cover  $p\colon M'\to M$  such that $\pi_1(M')$ is RFRS. Note that this finite cover gives canonically rise to a finite cover $(M',\gamma')\to (M,\gamma)$
of sutured manifolds.
Also note that $(M',\g')$ is  again a taut sutured manifold.
(This follows for example
from \cite[Corollary~5.3]{Ga83} since pull-backs of taut foliations are obviously taut.)

We now write  $W'=D(M',\gamma')$ and we denote by $\phi'\in H^1(W';\Z)$ the Poincar\'e dual of $[R_-']\in H_2(W',\partial W';\Z)$.
Note that the finite covering $p\colon M'\to M$ gives rise to a finite covering map $W'\to W$  which we also denote by $p$.
We furthermore note that $\phi'=p^*(\phi)$. By Theorem \ref{thm:agol}
there  exists an epimorphism $\a\colon \pi_1(M')\to G$ to a finite  group
such that in the covering $q\colon \widetilde{W'}\to W'$
corresponding to $\a\circ r_*\colon \pi_1(W')\to G$ the  class $q^*(\phi')$ is quasi-fibered.

 Note that $p\circ q\colon \widetilde{W'}\to W$ is a finite covering map such that
the pull back of $\phi$ is quasi-fibered.
By Theorem \ref{thm:fv} there  exists a unitary representation $\a\colon \pi_1(W)\to U(k)$ such that
$\Delta_{W,\phi,i}^\a\ne 0$ for $i=0,1,2$ and such that
\be \label{equ:fv} k\cdot \|\phi\|_T =\sum_{i=0}^2 (-1)^{i+1} \deg \Delta_{W\phi,i}^\a.\ee
We now write $X_-:=W\sm \nu R_-$ and $X_+:=W\sm \nu R_+$.
Note that $R_-$ admits two inclusions into $X_-$ which we denote
by $\iota_l$ (`left embedding') and $\iota_r$ (`right embedding').
Similarly we denote the  two inclusions of $R_+$ into $X_+$  by $\iota_l$ and $\iota_r$.

\begin{claim}
The inclusion induced  maps
 \[ \ba{rcl} \iota_l,\iota_r\colon H_i(R_-;\ck)&\to& H_i(X_-;\ck)\mbox{ and }\\
  \iota_l,\iota_r\colon H_i(R_+;\ck)&\to& H_i(X_+;\ck)\ea \]
    are isomorphisms for all $i$.
\end{claim}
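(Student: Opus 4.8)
The plan is to identify $X_-$ (and $X_+$) with a fundamental domain for an infinite cyclic cover of $W$ and then to extract the required isomorphisms from the twisted Alexander polynomial information provided by Theorem~\ref{thm:fv}. First I would record the topology: $X_-=W\setminus \nu R_-$ is $W$ cut open along the surface $R_-$, which is Poincar\'e dual to $\phi$; since $[R_+]=[R_-]$ in $H_2(W,\partial W;\Z)$ (both bound a copy of $M$ in $W$) and $(M,\gamma)$ is taut, $R_+$ is also a Thurston norm minimizing surface dual to $\phi$, and $X_+=W\setminus\nu R_+$ is $W$ cut along $R_+$. Because $R_\pm$ and $X_\pm$ are disjoint from a surface dual to $\phi$, the class $\phi$ restricts trivially to $\pi_1(R_\pm)$ and to $\pi_1(X_\pm)$, so $H_i^\a(R_\pm;\ck\tpm)\cong H_i^\a(R_\pm;\ck)\otimes_\C\C\tpm$ and $H_i^\a(X_\pm;\ck\tpm)\cong H_i^\a(X_\pm;\ck)\otimes_\C\C\tpm$ are free $\C\tpm$--modules, and the Mayer--Vietoris (Wang) sequence of the cut gives an exact sequence
\[ \dots \to H_i^\a(R_-;\ck\tpm)\xrightarrow{\iota_l-t\,\iota_r} H_i^\a(X_-;\ck\tpm)\to H_i^\a(W;\ck\tpm)\to H_{i-1}^\a(R_-;\ck\tpm)\to\dots , \]
and likewise with $R_+$, $X_+$ in place of $R_-$, $X_-$.

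Next I would feed in Theorem~\ref{thm:fv}: $\Delta_{W,\phi,i}^\a\ne 0$ for $i=0,1,2$ (and trivially for all $i$, since $W$ has nonempty toroidal boundary), i.e. every $H_i^\a(W;\ck\tpm)$ is $\C\tpm$--torsion. In the Wang sequence the kernel of $\iota_l-t\,\iota_r$ is the image of a torsion module in a free module, hence $0$; so $\iota_l-t\,\iota_r$ is injective, the connecting maps vanish, and $H_i^\a(W;\ck\tpm)\cong \mathrm{coker}(\iota_l-t\,\iota_r)$ is torsion. Comparing ranks of source, target and image forces $b_i^\a(R_-;\ck)=b_i^\a(X_-;\ck)=:n_i$ for all $i$, and choosing $\C$--bases of $H_i^\a(R_-;\ck)$ and $H_i^\a(X_-;\ck)$ we get $\Delta_{W,\phi,i}^\a\doteq \det(A_l^{(i)}-t\,A_r^{(i)})$ for $n_i\times n_i$ matrices $A_l^{(i)},A_r^{(i)}$ representing $\iota_l,\iota_r\colon H_i^\a(R_-;\ck)\to H_i^\a(X_-;\ck)$. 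Hence $\deg\Delta_{W,\phi,i}^\a\le n_i$, with equality if and only if both $A_l^{(i)}$ and $A_r^{(i)}$ are invertible, i.e. if and only if $\iota_l$ and $\iota_r$ are isomorphisms on $H_i^\a$. So it remains to prove $\deg\Delta_{W,\phi,i}^\a=n_i$ for every $i$.

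For $i\ge 2$ this is immediate when $R_\pm$ have no closed components, since then $H_i^\a(R_\pm;\ck)=0=H_i^\a(X_\pm;\ck)$ (the case of closed components being dealt with separately through the duality of Theorem~\ref{thm:duality}). For $i=0$: the maps $\iota_l,\iota_r\colon H_0^\a(R_-;\ck)\to H_0^\a(X_-;\ck)$ are surjective by (the argument of) Lemma~\ref{lem:h0surjects}, hence isomorphisms since source and target both have dimension $n_0$; thus $\deg\Delta_{W,\phi,0}^\a=n_0$. For $i=1$ I would invoke the Thurston norm identity \eqref{equ:fv}: since $R_-$ is Thurston norm minimizing dual to $\phi$ with no sphere or disk components, $\|\phi\|_T=\chi_-(R_-)=-\chi(R_-)$, while Lemma~\ref{lem:euler} gives $k\chi(R_-)=\sum_i(-1)^i n_i$, so \eqref{equ:fv} becomes
\[ \sum_{i}(-1)^{i+1}\bigl(n_i-\deg\Delta_{W,\phi,i}^\a\bigr)=0 . \]
Every summand $n_i-\deg\Delta_{W,\phi,i}^\a$ is $\ge 0$, and the $i=0$ and $i\ge 2$ summands vanish by the above, so the $i=1$ summand vanishes too; thus $\deg\Delta_{W,\phi,1}^\a=n_1$, giving the claim for $R_-$. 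The argument for $R_+$ is verbatim, using that $\phi$ is also dual to $R_+$ and that $R_+$ is also Thurston norm minimizing.

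The main obstacle is exactly the passage from ``$\iota_l-t\,\iota_r$ is injective with torsion cokernel'' — which by itself yields only the \emph{dimension} equalities $b_i^\a(R_-;\ck)=b_i^\a(X_-;\ck)$ — to ``$\iota_l$ and $\iota_r$ are individually isomorphisms''. This is where the full strength of Theorem~\ref{thm:fv} is needed: mere non--vanishing of the $\Delta_{W,\phi,i}^\a$ is not enough, and it is the sharp degree equality $k\|\phi\|_T=\sum(-1)^{i+1}\deg\Delta_{W,\phi,i}^\a$ — the quasi--fiberedness of $\phi$ as detected by $\a$ — that pins down $\deg\Delta_{W,\phi,i}^\a=n_i$, equivalently that the cut surface $R_\pm$ becomes, through $\a$, a twisted--homology fibre. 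A secondary technical point is the bookkeeping for possible closed components of $R_\pm$, where $H_2$ need not vanish and the degree--$2$ maps must instead be controlled via Poincar\'e--Lefschetz duality.
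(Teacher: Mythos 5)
Your argument is essentially the paper's own proof: the same Wang/Mayer--Vietoris sequence over $\ct$, the same splitting into short exact sequences from the non-vanishing of the $\Delta_{W,\phi,i}^\a$, the same identification $\Delta_{W,\phi,i}^\a\doteq\det(\iota_l-t\iota_r)$ together with the criterion that $\deg\det(A+tB)$ attains the full size exactly when both matrices are invertible, and the same use of the degree identity (\ref{equ:fv}) to force the $i=1$ case. The only divergence is that for $i=0,2$ the paper simply cites \cite[Propositions~3.5~and~3.6]{FK06}, whereas you sketch these directly (surjectivity on $H_0$ plus a dimension count, and vanishing of $H_2$ resp.\ duality for closed components); that sketch is consistent with what the cited propositions establish.
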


Since  $\Delta_{W,\phi,1}^{\a}\ne 0$ it follows from \cite[Proofs~of~Propositions~3.5~and~3.6]{FK06} that
the maps
\be \label{equ:02iso} \iota_{l},\iota_r\colon H_0(R_-;\ck)\to H_0(X_-;\ck) \mbox{ and } \iota_l,\iota_r\colon H_2(R_-;\ck)\to H_2(X_-;\ck)\ee
are isomorphisms. Note that $W$ has empty or toroidal boundary. A standard Poincar\'e duality argument shows that $\chi(W)=0$.
 This implies in turn that $\chi(R_-)=\chi(X_-)$. It now follows from
(\ref{equ:02iso}) and from Lemma \ref{lem:euler}  that
\be \label{equ:sameb1} b_1(R_-;\ck)=b_1(X_-;\ck).\ee
It remains to show that an isomorphism is given by the inclusion induced maps.

By  \cite[Proposition~3.2]{FK06} we have the following long exact sequence in homology:
\be \label{equ:ses} \dots H_i(R_-;\C^k)\otimes \ct\xrightarrow{\iota_l-\iota_rt} H_i(X_-;\C^k)\otimes \ct\to H_i(W;\ckt)\to \dots \ee
Since $\Delta_{W,\phi,i}^\a\ne 0$ for $i=0,1,2$ it follows that $H_i(W;\ckt)$ is $\ct$--torsion for any $i$.
On the other hand $H_i(R_-;\C^k)\otimes \ct$ is $\ct$-torsion free, it follows that
 the long exact sequence (\ref{equ:ses}) splits into short exact sequences:
\[ 0\to  H_i(R_-;\C^k)\otimes \ct\xrightarrow{\iota_l-\iota_rt} H_i(X_-;\C^k)\otimes \ct\to H_i(W;\ckt)\to 0.\]
By the definition of the order of a module we have
\be \label{equ:deltai} \Delta_{W,\phi,i}^\a=\det\left(\iota_l-\iota_rt: H_i(R_-;\C^k)\otimes \ct\to H_i(X_-;\C^k)\otimes \ct\right).\ee
(Here we implicitly use the fact we established above that $b_i(R_-;\ck)=b_i(X_-;\ck)$ for $i=0,1,2$.)
Now recall that if $A$ and $B$ are $s\times s$-matrices over $\C$, then
\be \label{equ:detab} \deg(\det(A+tB))=s\Leftrightarrow \det(A)\ne 0 \mbox{ and }\det(B)\ne 0.\ee
Since $\iota_l,\iota_r\colon  H_i(R_-;\ck)\to H_i(X_-;\ck) $ are isomorphisms for $i=0$ and $i=2$ we thus obtain from (\ref{equ:deltai}) that
\be \label{equ:02} \deg \Delta_{W,\phi,i}^\a=b_i(R_-;\C^k)\mbox{ for }i=0,2.\ee
Note that by Lemma \ref{lem:euler} and by the above we have
\[ \sum_{i=0}^2 (-1)^{i+1} b_i(R_-;\C^k)=-k\chi(R_-)=k\chi_-(R_-)=k\cdot \|\phi\|_T .\]
Combining this with (\ref{equ:fv}), (\ref{equ:deltai}) and (\ref{equ:02})  we see that
\[ \deg\left(\det(\iota_l-\iota_rt: H_1(R_-;\C^k)\otimes \ct\to H_1(X_-;\C^k)\otimes \ct)\right)=b_1(R_-;\C^k).\]
By   (\ref{equ:detab})  we now conclude
the maps $ \iota_l,\iota_r\colon H_1(R_-;\ck)\to H_1(X_-;\ck)$ are isomorphisms.
This concludes the proof of the claim.

Now note that we have the following commutative diagram:
 \[ \xymatrix{ H_i(R_-;\ck)\ar[rr]^\cong\ar[dr]&& H_i(X_-;\ck)\\  &H_i(M;\ck).\ar[ur]}\]
 It follows that that the map $H_i(R_-;\ck)\to H_i(M;\ck)$ is injective. (The same conclusion also holds with ``$+$'' subscripts.)
Hence in order to show that the maps  $H_i(R_-;\ck)\to H_i(M;\ck)$ are isomorphisms it suffices to show the
following claim:

\begin{claim}
For $i=0,1,2$ we have
\[ b_i(R_-;\ck)=b_i(M;\ck).\]
\end{claim}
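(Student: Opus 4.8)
The plan is to compute $b_i(M;\ck)$ directly from the Euler characteristic constraint together with the vanishing we already know in degrees $0$ and $3$, and to identify $b_i(R_-;\ck)$ with the appropriate Betti numbers of $W$ that the twisted Alexander polynomial controls. First I would record that $\chi(M)=\frac12\chi(\partial M)=\frac12(\chi(R_-)+\chi(R_+))=\chi(R_-)$, using that $R_-$ and $R_+$ meet $\g$ in the same curves (so the annular part of $\partial M$ contributes zero) and that $(M,\g)$ being taut and not $D^3$ forces $\chi(R_-)=\chi(R_+)$; hence by Lemma~\ref{lem:euler} we get $\chi(M,R_-;\ck)=k(\chi(M)-\chi(R_-))=0$. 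By Corollary~\ref{cor:h0surjects} we have $H_0(M,R_-;\ck)=H_3(M,R_-;\ck)=0$, so the long exact sequence of $(M,R_-)$ gives $b_0(R_-;\ck)=b_0(M;\ck)$ and reduces everything to showing $b_1(R_-;\ck)=b_1(M;\ck)$; then $b_2$ follows from $\chi(M,R_-;\ck)=0$, using that the surface $R_-$ is $2$-dimensional so $b_i(R_-;\ck)=0$ for $i\ge 3$.

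For the degree-$1$ equality I would pass to the double $W=D(M,\g)$ and exploit that we have already shown, in the previous claim, that both inclusions $R_-\hookrightarrow X_-=W\sm\nu R_-$ induce isomorphisms on $H_*(-;\ck)$. The Mayer--Vietoris sequence (or rather the surgery-type exact sequence $H_i(R_-;\ck)\xrightarrow{\iota_l-\iota_r t}H_i(X_-;\ck)\to H_i(W;\ckt)\to\cdots$ evaluated at a suitable point, or more simply an honest Mayer--Vietoris for $W=M\cup_R M$) combined with those isomorphisms should collapse so as to yield $H_i(W;\ck)$ in terms of a single copy of $H_i(M;\ck)$. The cleanest route is: apply Mayer--Vietoris for $W=M\cup_R M$ with the two pieces each equal to $M$ and intersection $R=R_+\cup R_-$; since $R_-\to M$ and $R_+\to M$ are $\ck$-homology injective (the latter by the ``$+$'' version of the preceding claim together with the analogous commutative triangle), the connecting maps vanish in the relevant range and one reads off $b_1(W;\ck)$, $b_2(W;\ck)$ in terms of $b_1(M;\ck)$, $b_1(R_\pm;\ck)$. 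Comparing with the already-established relation $b_i(R_-;\ck)=b_i(X_-;\ck)$ and the Euler-characteristic bookkeeping $\chi(W)=0$, $\chi(R_-)=\chi(M)$, the unknowns are forced and $b_1(R_-;\ck)=b_1(M;\ck)$ drops out.

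An alternative, and perhaps the one I would actually write, avoids Mayer--Vietoris on $W$ and instead argues that the composite $H_i(R_-;\ck)\to H_i(M;\ck)\to H_i(X_-;\ck)$ is the isomorphism $\iota_l$ from the earlier claim, so $H_i(R_-;\ck)\to H_i(M;\ck)$ is split injective; symmetrically, using $\iota_r$ factoring through the \emph{other} copy of $M$ inside $W$, one shows $H_i(M;\ck)\to H_i(X_-;\ck)$ is injective, hence (being a factor of an isomorphism on finite-dimensional spaces) an isomorphism, whence $H_i(R_-;\ck)\to H_i(M;\ck)$ is an isomorphism for all $i$ and in particular $b_1$ agree. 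The main obstacle I anticipate is making the second injectivity precise: one must check that the retraction $r\colon W\to M$ and the two embeddings $M\hookrightarrow W$ interact with $\iota_l,\iota_r$ exactly as drawn, i.e.\ that $r$ restricted to one copy of $M$ is the identity and the relevant triangle of inclusions commutes, so that the factorization $H_i(M;\ck)\to H_i(X_-;\ck)$ genuinely has $\iota_l$ (or $\iota_r$) as its post-composition with $H_i(X_-;\ck)\to H_i(W;\ck)$. Once that diagram-chase is set up honestly, the conclusion is immediate and the proof of Theorem~\ref{mainthmif} is complete by the long exact sequences of $(M,R_\pm)$ together with $\chi(M,R_\pm;\ck)=0$.
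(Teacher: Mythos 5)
Your overall strategy---reduce to $i=1$ via Euler characteristics, then exploit the double $W$ and the isomorphisms $\iota_l,\iota_r$ from the previous claim---is in the right ballpark, and your preliminary steps (injectivity of $H_i(R_\pm;\ck)\to H_i(M;\ck)$ via the commutative triangle through $X_\mp$, and the Euler-characteristic bookkeeping) are correct. But both concrete routes you offer for the decisive inequality $b_i(M;\ck)\le b_i(R_\pm;\ck)$ break at the same spot. In the Mayer--Vietoris sequence for $W=M_1\cup_R M_2$ with $R=R_-\cup R_+$, the map whose injectivity you need is $H_i(R_-;\ck)\oplus H_i(R_+;\ck)\to H_i(M_1;\ck)\oplus H_i(M_2;\ck)$, $(x,y)\mapsto\bigl(\iota x+\iota'y,\,-(\iota x+\iota'y)\bigr)$; its kernel is the kernel of the \emph{combined} map $H_i(R_-;\ck)\oplus H_i(R_+;\ck)\to H_i(M;\ck)$, which injectivity of the two summands separately does not control, so the connecting maps need not vanish. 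In your alternative route, the factorization of $\iota_l$ through $M_1$ gives surjectivity of $H_i(M_1;\ck)\to H_i(X_-;\ck)$, and the factorization of $\iota_r$ through $M_2$ gives surjectivity of $H_i(M_2;\ck)\to H_i(X_-;\ck)$; neither yields the injectivity of $H_i(M_1;\ck)\to H_i(X_-;\ck)$ that you assert, and a surjection that is one factor of an isomorphism need not be injective---its injectivity is equivalent to the dimension count you are trying to prove. The retraction $r\colon W\to M$ that you flag does not rescue this: the representation $\a\colon\pi_1(W)\to U(k)$ is produced by Theorem \ref{thm:fv} applied to $W$, and there is no reason for it to factor through $r_*$, so $r$ need not induce any map on $\a$-twisted homology.

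The fix is to run Mayer--Vietoris not for $W$ but for $X_+=W\sm\nu R_+=M_1\cup_{R_-}M_2$, i.e.\ for the two copies of $M$ glued along a \emph{single} one of the two surfaces. There the Mayer--Vietoris map is $H_i(R_-;\ck)\to H_i(M_1;\ck)\oplus H_i(M_2;\ck)$, $x\mapsto(\iota x,-\iota x)$, which \emph{is} injective because $\iota$ is, so the long exact sequence splits into short exact sequences and $b_i(X_+;\ck)=2\,b_i(M;\ck)-b_i(R_-;\ck)$. Combining this with $b_i(X_+;\ck)=b_i(R_+;\ck)$ (the ``$+$'' version of the preceding claim) and with $b_i(R_\pm;\ck)\le b_i(M;\ck)$ forces $b_i(M;\ck)=b_i(R_-;\ck)=b_i(R_+;\ck)$ for all $i$ simultaneously. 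This is the paper's argument; it also makes your reduction to the case $i=1$ unnecessary.
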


We first record  that by the discussion preceding the claim  we have
\be \label{equ:birm}  b_i(M;\ck)\geq b_i(R_-;\ck)\mbox{ and } b_i(M;\ck)\geq b_i(R_+;\ck).\ee
Now note that we can write $X_+=M_1\cup_{R_-}M_2$ where $M_1,M_2$ are two copies of $M$. We consider the  long exact Mayer-Vietoris sequence corresponding to this
decomposition:
\be \label{equ:les2}\dots \to H_i(R_-;\ck)\to H_i(M_1;\ck)\oplus H_i(M_2;\ck)\to H_i(X_+;\ck) \to\dots \ee
By the above inclusion induced maps $H_i(R_-;\ck)\to H_i(M_1;\ck)$ and $H_i(R_-;\ck)\to  H_i(M_2;\ck)$
are injective. We thus see that the long exact sequence (\ref{equ:les2}) splits into short exact sequences:
\[ 0  \to H_i(R_-;\ck)\to H_i(M_1;\ck)\oplus H_i(M_2;\ck)\to H_i(X_+;\ck) \to 0.\]
This implies that
\[ b_i(X_+;\ck)=b_i(M_1;\ck)+b_i(M_2;\ck)-b_i(R_-;\ck)\mbox{ for }i=0,1,2.\]
It follows from the equality $b_i(X_+;\ck)=b_i(R_+;\ck)$ and from (\ref{equ:birm})  that
\be \label{equ:bim} b_i(M_1;\ck)\geq b_i(R_+;\ck)=b_i(X_+;\ck)=b_i(M_1;\ck)+(b_i(M_2;\ck)-b_i(R_-;\ck)).\ee
Note that  the second summand is again non-negative by (\ref{equ:birm}).
We now see from (\ref{equ:bim})  that in fact
 $b_i(M;\ck)=b_i(M_2;\ck)=b_i(R_-;\ck)$ as claimed.
Similarly we prove that $b_i(M_1;\ck)=b_i(R_+;\ck)$.
This concludes the proof of the claim and thus the proof of Theorem \ref{mainthmif}.

\subsection*{Acknowledgements}
The second author was supported by Basic Science Research Program through the National Research Foundation of Korea (NRF) funded by the Ministry of Education, Science and Technology (No. 2012001747 and No. 20120000341).

\end{document}